\newtheorem{thm}{Thm}[section]
\newtheorem{rem}{Remark}[section]
\newtheorem{Step}{Step}[section]
\newtheorem{prop}{Proposition}[section]
\newcommand{\R}{\mathbb{R}}
\newcommand{\C}{\mathbb{C}}
\newcommand{\eps}{\varepsilon}
\newcommand{\p}{\partial}
\newcommand{\D}{\displaystyle}
\newcommand{\pmean}{\langle p\rangle}
\newcommand{\qmean}{\langle q\rangle}
\title[Opinion formation models with heterogeneous... ]{Opinion formation models with heterogeneous persuasion and zealotry}
\begin{document}

\author[M. P\'erez-Llanos, J.P. Pinasco, N.Saintier, and A. Silva]{Mayte P\'erez-Llanos,
Juan Pablo Pinasco, Nicolas Saintier, and Anal\'ia Silva}

\address{M. P\'erez-Llanos,
\hfill\break\indent IMAS UBA-CONICET  and  Departamento  de Matem{\'a}tica,
 \hfill\break\indent Facultad de Ciencias Exactas y Naturales, Universidad de Buenos Aires,
 \hfill\break\indent  Av Cantilo s/n, Ciudad Universitaria
 \hfill\break\indent (1428) Buenos Aires, Argentina.}
 \email{{\tt maytep@dm.uba.ar,}}

\address{ J.P. Pinasco,
\hfill\break\indent IMAS UBA-CONICET  and  Departamento  de Matem{\'a}tica,
 \hfill\break\indent Facultad de Ciencias Exactas y Naturales, Universidad de Buenos Aires,
 \hfill\break\indent  Av Cantilo s/n, Ciudad Universitaria
 \hfill\break\indent (1428) Buenos Aires, Argentina.}
 \email{{\tt  jpinasco@gmail.com}}

\address{ N. Saintier
\hfill\break\indent IMAS UBA-CONICET  and  Departamento  de Matem{\'a}tica,
 \hfill\break\indent Facultad de Ciencias Exactas y Naturales, Universidad de Buenos Aires,
 \hfill\break\indent  Av Cantilo s/n, Ciudad Universitaria
 \hfill\break\indent (1428) Buenos Aires, Argentina.}
 \email{{\tt  nsaintie@dm.uba.ar}}

\address{A. Silva
\hfill\break\indent Instituto de Matem\'atica Aplicada San Luis, IMASL.
 \hfill\break\indent Universidad Nacional de San Luis and CONICET.
 \hfill\break\indent
Ejercito de los Andes 950.
 \hfill\break\indent D5700HHW San Luis, Argentina.}
 \email{{\tt analia.silva82@gmail.com}}

\keywords{Opinion formation models; Boltzmann equation; grazing limit; non-local
transport equation
\\
\indent 2010 AMS Subject Classification: 91C20; 82B21; 60K35.}

\begin{abstract}
 In this work  an opinion formation model with heterogeneous
agents is proposed. Each agent is supposed to have different power of persuasion, and besides its
own level of zealotry, that is, an individual willingness to being convinced by other
agent. In addition, our model includes zealots or stubborn agents, agents that never change opinions.

We derive a Bolzmann-like equation for the distribution of agents on the space of
opinions, which is approximated by a transport equation with a nonlocal drift term.
 We study the long-time asymptotic behavior of  solutions, characterizing the limit distribution of agents,
which consists of the distribution of stubborn agents, plus a delta function at the
mean of their opinions, weighted by they power of persuasion.

  Moreover,   explicit bounds on the rate of convergence are given,  and the
time to convergence is shown to decrease when the number of stubborn agents increases. This is a
remarkable  fact observed in agent based simulations in different works.
\end{abstract}

\maketitle

\section{Introduction}

In recent years, opinion formation, as well as other sociological and economical
phenomena, have attracted  a considerable attention from physicists and
mathematicians, as it was realized that concepts from statistical mechanics could be
successfully applied to model them. We refer for instance to the papers by  S. Galam
 \cite{G1,GZ}, Sznajd and Sznajd-Veron \cite{S}, Deffuant et al. \cite{DNAW,DAWF}, and
Slanina \cite{Sla} among other works. From them, quickly emerged two very active new
fields, usually called {\it sociophysics} and {\it econophysics}, devoted to the
description of these phenomena from the physicists point of view. We underline some
recent books \cite{CCC,Gal,SC,Sla} for an overview and up-to-date references.

In the sociophysics community, a customary procedure for  modelling the formation of
opinions in a population consists in representing the opinion of an individual, with
respect to certain subject, by a real number. This number can vary in some discrete
set or in a fixed interval, say $[-1,1]$, meaning  $-1$ to be completely against the
subject. Individual changes of opinion are assumed to be a result of binary random
interactions between agents. The opinions $w$ and $w_*$ of two agents will turn  to
new opinions $w'$ and $w_*'$ as a consequence of the discussion enclosed by the two
agents, and also  by the influence  of external factors such as media or propaganda, and
spontaneous changes of mind. Denoting by $f(t,w)$ the proportion of agents in the
population with opinion $w$ at time $t$, it is possible to describe the time evolution
of $f(t,.)$ with a Boltzmann-like equation, whose collision part reflects the dynamics
in the changes of opinion due to encounters. Thus, the long-time asymptotic behavior
of $f(t,.)$ can be analyzed theoretically and/or numerically.

This procedure, which is by far not the only way of modelling opinion formation, is
strongly inspired by the kinetic theory of rarefied gases and granular flows. The
recent advances in the mathematical foundations of kinetic theory (see
\cite{vil1,vil2}), motivated several mathematicians to perform a rigorous study of
this kind of problems, using tools from partial differential equations, optimal
transport, game theory and stochastic processes.  This approach has been successfully
implemented by Bellomo, Ben-Naim, Pareschi, Toscani and their collaborators in a wide variety of
settings, we refer the interested reader to their works \cite{ABD,Bell,Bell2,BenNaim1,BenNaim2,PT} and
the surveys in Ref. \cite{NPT} for further details.

\medskip

In this work we introduce  a continuous  model of opinion formation, where agents
opinions are real numbers in $[-1,1]$, and agents change their opinions  through
binary interactions  as mentioned before.
Most of the agents are assumed to have some propensity to reach an agreement, the
so-called compromise hypothesis and hence after each interaction they tend to get
closer positions.  However, we introduce  a high degree of heterogeneity among the
agents,  and each agent $i$ has a priori two individual characteristics:
\begin{itemize}
\item some power of persuasion,  represented by a probability $p_i \in[0,1]$ that the
agent
    will convince the other agent involved in the interaction, and

\item some willingness to change his/her own opinion, represented by a probability
    $q_i\in[0,1]$  that the agent is persuaded.
\end{itemize}

Observe that the assumption that $q$ could be zero introduces {\it zealots} or
stubborn individuals, i.e., agents who have strong opinions and they are not affected
by other agents' opinions, not changing their mind after interactions. The presence of stubborn agents was studied mainly
in discrete  models of opinion
dynamics, related to consensus formation, game theory models, and diffusion of
innovations, among other applications, see  \cite{DL,lipow,Mob,WVC,XEK,xie,Ozg}.

In these works it is shown, mainly through simulations, how the stubborn agents affect the process of consensus formation,
specially the kind of expected equilibria that could arise due to their influence, and
the time to convergence. Let us remark that in \cite{Ozg} and related works, several
results were proved theoretically using probabilistic arguments. Also, a striking
fact was observed in the simulations: the time to convergence decreases when the
number of stubborn agents increases.

Much fewer in number are the works considering continuous opinion models with zealots
or persuasion, see for instance \cite{BrT,autom}. Let us note that the presence of
leaders and followers as in During et al. \cite{DM} has a somewhat similar dynamics
when a leader and a follower interact, since only the follower updates its opinion.
However, the interactions among leaders are allowed, and hence they can change their
opinions.

In the work \cite{DW}, the authors presented a related model
including   an additional
variable representing the assertiveness level of agents,  similar to our variable $p$. Now,  the assertiveness evolves in time, and a
Matthew effect, or Rich-Gets-Richer dynamics, is proposed, where after collisions the
agent with higher  (respectively, lower) assertiveness increases (resp., decreases) its value.   On the other
hand, the agents in their
model are always receptive to other agents opinions, and no zealots are present.

Recently, in \cite{WVaz}, each agent has  some parameter
$k$, which is a mix between zealotry and assertiveness, which also evolve in time, and zealots can appear dynamically in the model.

\bigskip

Our aim in this work is to rigorously show that the long time behavior of the agent
based model can be described with a  Boltzmann-like equation satisfied by the
distribution of agents $f(t,.)$, and it is properly
    approximated by a non-linear non-local transport equation, which is well-posed for
    measure-valued functions.
    We then establish the convergence of the solution to some
    limit density, with explicit bounds on the time of convergence.

    Essentially, the limit reveals that the part of the population composed by
    individuals  who are willing to change their opinion tends to share the same opinion.
    Furthermore, we find out that this limit opinion is precisely the mean opinion of the
    stubborn individuals, those who always keep their own opinion ($q=0$),
    weighted by their power of persuasion.

  Moreover, the bounds for the rates of convergence  point out that, the greater the number of stubborn
    individuals is, the faster the system reaches the stationary state. This  fact has been observed in our simulations and also in related discrete opinion
    models, see for example \cite{lipow,Mo1,Mob,xie},  exhibiting that the asymptotic distribution of opinions in the
    population is completely determined by the stubborn individuals, although their
     influence take a long time to be observed when there
    are just a few of them.

\medskip

\subsection{Notations and definitions} We  denote $K=[-1,1]\times [0,1]\times [0,1]$ and a generic
point of $K$ as $\varpi=(w,p,q)$. Let $P(K)$ be the convex set of probability measures
on $K$. Given $f\in P(K)$, we  write the integral of a function $\phi$ against $f$
as $\int_K\phi(\varpi)\,df(\varpi)$ or as $\int_K\phi(\varpi)f(\varpi)\,d\varpi$. The
expression $f(\varpi)\,d\varpi$ is merely a notation, we are not assuming that $f$ has
a density necessarily. By $f(w)dw$ we understand the marginal of $f$ with respect to the
first variable $w$, namely
$$ \int_{A\times [0.1]\times [0.1]} f(\varpi)\,d\varpi
=\int_A f(w)dw \qquad \text{for any $A\subset [-1,1]$ Borel.} $$ Here again $f(w)dw$
is merely a notation, we are not assuming in general that this measure has a density.

\medskip
$M(K)$ stands for the space of finite measures on $K$,  $M_+(K)$ denotes the cone of
nonnegative measures and  $P(K)$ the convex cone of probability measures.
These sets are endowed  with the total variation norm, namely
\begin{equation}\label{norm}\|f\|
=\sup\left\{\int_K \phi\,df:\, \phi\in C(K) \mbox{ such that }\|\phi\|_\infty\le
1\right\}.
\end{equation}
Let us remark  that  $M(K)$ becomes a Banach space with this norm.

\medskip

Later on, we will need to endow the set $P(K)$ with the weak convergence of the measure
topology. It will be convenient then, to recall from \cite{V} that the  Wasserstein distances $W_p$,  with $p\ge 1$,
between two probability measures $\mu,\nu$ are defined as
\begin{equation}\label{DefWasserstein1}
  W_p(\mu,\nu)
  =\left(\inf_{\alpha\in \Gamma(\mu,\nu)}\int_{K\times K} |x-y|^p d\alpha(x,y)\right)^{1/p},
\quad p\geq 1,
\end{equation}
being $\Gamma(\mu,\nu)$ the collection of all measures on $K\times K$ with marginal
measures $\nu$ and $\mu$ on the first and second factor, respectively. When $p=1$ the  Kantorovich and Rubinstein Theorem provides a dual representation of $W_1$, namely
\begin{equation}\label{DefWasserstein2}
W_1( \nu,\mu) = \sup\left\{\int_K\varphi \,d(\mu-\nu):\quad \varphi\; \mbox{ is 1-
Lipschitz}\right\}.
\end{equation}

\medskip

\medskip

\subsection{Organization of the paper}

  \medskip

The paper is organized as follows.

 Section \S 2 contains a detailed description of the rules
governing the updates of the individuals opinions during encounters,
determining a Boltzmann-like equation satisfied by the agent
distribution  $f(t,.)$. We  introduce in addition the so-called grazing limit
that yields a Fokker-Planck equation,  modelling the long-time
asymptotic behavior of the density, when the interactions among the
agents produce very tiny changes in their opinions, namely when the
parameters $\sigma,\gamma\to 0$. This idea of studying Boltzmann-like
equation in the limit of small changes in each interaction comes
from the literature about the Boltzmann equation (see e.g.
\cite{DL,D,DR,V2,V3} and references therein) and was first applied
in the context of opinion formation model by Ben-Naim, Krapivsky and Redner \cite{BenNaim1}, and by Toscani \cite{T}.

In
Section \S 3  we derive
to the analysis of the asymptotic behavior as $t\to +\infty$ of the equation arising
when in the grazing limit $\frac{\sigma^2}{\gamma}\to 0$, namely, when the transport term dominates the diffusive term,
$$\p_tf(w,p,q) + \p_w((m_t-w)q \pmean f(w,p,q))=0,
$$
where $\pmean $ is the mean value of  the persuasion power $p$, which remains constant
in time, and  $m(t)=\int_K \frac{p}{\pmean}w\,df_t(\varpi)$ is the weighted mean
opinion.

We characterize the limit
distribution of agents, which consists of the original distribution of stubborn agents, plus a
delta function at the mean of their opinions, weighted by they power of persuasion. We
determine explicit bounds on the rate of convergence and  show that the time to
convergence decreases as the number of stubborn agents increases.

Finally, some  computational experiments illustrating our
theoretical results are included in Section \S 4. We perform an
agent based simulation of the dynamics of our problem without noise, in order to verify that  its stationary distribution coincide with the
theoretical one.

In a Supplement we include for a sake of completeness the proof of  the existence and uniqueness of solutions to
the Boltzmann equation introduced in section 2, using the ideas in
Chapter 6 of the book of Cercignani, Illner and Pulvirenti
\cite{CI}.
 For the reader's convenience, also we provide in this Supplement a detailed proof of the
approximation of the Boltzmann-like equation by a diffusion-transport equation, via the grazing limit.
This proof is based mainly on Toscani \cite{T}.

\section{Description of the model}

\subsection{Microscopic interaction rules}

Let us introduce our model of opinion formation. We consider a population composed by
$N$ agents. The opinion of an agent with respect to certain matter is represented by a
real number $w\in [-1,1]$ (meaning $-1$ being completely in disagreement with the
subject and $1$ in complete agreement). In addition, we take into account the ability
(or difficulty) of an individual to persuade another agent, as well as his/her
reticence (or facility)  to change his/her opinion. We denote by $p\in[0,1]$ the
probability of the agent to convince the opponent and by $q\in[0,1]$ the probability
that the agent is persuaded to change his/her own opinion. Each agent is thus
characterized by three parameters $(w,p,q)$.

Agents' parameters $(w,p,q)$ could be modified during binary encounters. For
simplicity, in this work the parameters $(p; q)$ are assumed to be fixed and to remain
unchanged in time, although there exist models where the agents' persuasion also
evolve, as in \cite{BrT,Vaz}.

We now describe the up-dating rules of the opinions. Consider two interacting agents
with parameters $(w,p,q)$ and $(w_*,p_*,q_*)$  before the encounter. Denote by
$(w',p',q')$ and $(w'_*,p'_*,q'_*)$ the new values for the parameters after the
interaction, respectively. As we mentioned before, the parameters $(p,q)$  will remain
unchanged: $p'=p$, $q'=q$, $p_*'=p_*$, $q'_*=q_*$. Regarding the up-dating of the
opinion, we propose the following rule:
\begin{equation} \label{Regla}
\begin{array}{rl}
w' & =  w+\gamma qp_*(w_*-w)+\eta q D(|w|), \\
w_*'& =  w_*+ \gamma  pq_*(w-w_*)+\eta_* q_*D(|w_*|).
\end{array}
\end{equation}
Observe that the change of opinion  $w'-w$ is the sum of two parts. On the one hand,
the term $\gamma qp_*(w_*-w)$  reflects the idea that the agents tend to reach a
compromise. This tendency is directly proportional to both his/her  willingness of
changing his/her own opinion, $q$ and also the power of persuasion  of the opponent,
$p_*$. Here  $\gamma$ is a given real number  in $(0,1/2)$ modelling the strength of
the interaction.

On the other hand,  the term $\eta q D(|w|)$ represents the inclination of an agent to
change his/her opinion due to random external or internal factors. Obviously, this
term is proportional to the facility $q$ of the agent to modify his/her opinion.  By
$\eta$ and $\eta_*$ we denote two independent and identically distributed random
variables, with null expected value and variance $\sigma^2$. More precisely, we will
write $\eta=\sigma Y$, being $Y$ a symmetric random variable such that $E[Y]=0$,
$Var[Y]=1$ and $E[|Y|^3]<\infty$, and the same is assumed  for $\eta_*$. The function
$D(|w|)\in [0,1]$ is supposed to be non-increasing in $|w|$. Some typical examples are
$1-w^2$, $1-|w|$ and $\sqrt{1-w^2}$. Notice that in these examples $D(\pm 1)=0$. This
is in accordance with the fact  that  the more extreme an opinion is, the more
difficult to be changed.

\subsection{Macroscopic kinetic model: Boltzman equation}

Let $ f(t,\varpi)$ be the distribution  agents with opinion $\varpi$ at time  $t\geq0$, hence
$f(t,\cdot)$ is a probability measure on $K$. We  usually denote this measure as
$df_t$ or $f_t(\varpi)d\varpi$ bearing in mind that $f_t$ may not necessarily be absolutely
continuous with respect to Lebesgue measure. In fact, $f_t$ could be a Dirac measure.

 In case of  binary interactions the time evolution of the density $f$ is  a balance between   gain and loss of opinion
terms through an integro-differential equation of Boltzmann type:
\begin{equation}\label{EqBoltzmann}
\begin{array}{ll}
\D\frac{d}{dt}\D \int_K  \phi(\varpi) \, df_t(\varpi)
  \\ \qquad=\D\int_{B^2}\int_{K^2}\D\beta_{(w,w_*)\to(w',w_*')}( \phi(\varpi')-\phi(\varpi)) \, df_t(\varpi)df_t(\varpi_*)
    d\eta d\eta_*,
\end{array}
\end{equation}
for any  $\phi\in C^\infty(K)$, see  \cite{CI}. The kernel $\beta$ is related to the
transition rate  and takes into account the external events acting on the opinion. For
simplicity, we can take
$$ \beta_{(w,w_*)\to(w',w_*')}
  =\theta(\eta)\theta(\eta_*)\chi_{|w'|\leq 1}\chi_{|w_*'|\leq 1}, $$
where by $\chi_A$ we understand the indicator function of the set $A$ and $\theta$ is a
symmetric probability density  with zero mean and variance $\sigma^2$, characterizing
the diffusion of information. To avoid the dependence of $\beta$ on the probabilities
$w$, $w_*$ through the indicator function, we can ensure the boundedness of $|w'|$ and
$|w'_*|$,  assuming that the support of the random variables $\eta,\eta_*$ is
conveniently delimited. This reckons on the choice of the function $D$; for instance,
if $D(|w|)=1-|w|$ it suffices to take  $B=(-(1-\gamma), 1-\gamma)$ to obtain $|w'|\leq
1,\;\;|w_*|\leq 1$, while if $D(|w|)=1-w^2$, it is enough to have $|\eta|\le
\frac{1-\gamma}{2}$ since then $|\eta|\le \frac{1-\gamma}{1+|w|}$ (see \cite{T}). With
these choices, equation (\ref{EqBoltzmann}) corresponds to a classical Boltzmann
equation
\begin{equation}\label{BoltzDebil2}
 \D\frac{d}{dt} \int_K  \phi(\varpi) \,df_t(\varpi)
  =\D\int_B \int_{K^2} ( \phi(\varpi')-\phi(\varpi)) \,df_t(\varpi) df_t(\varpi_*) d\theta(\eta),
\end{equation}
for any  $\phi\in C^\infty(K)$.

Taking  $\phi(\varpi)=p$ and  $\phi(\varpi)=q$ as test functions in
\eqref{BoltzDebil2}  it is easy to see that the average of the persuasion ability,
$\pmean$,  and of the zealotry, $\qmean$, respectively, are constant in time. We
assume that $\qmean >0$, otherwise no opinion will change.

\medskip

\medskip

Our first concern is to show the existence of a solution to
\eqref{BoltzDebil2}. This is the purpose of the following Theorem.
The proof follows classical ideas and is detailed thereafter in the Supplement for the
reader's convenience.

\begin{thm} \label{ThmExistenceBoltzmann}
Given $f_0\in P(K)$, there exists a unique $f\in C^1([0,+\infty),P(K))$, where $P(K)$
is endowed with the total variation norm $(\ref{norm})$, such that
\begin{equation}\label{BoltzDebil}
\begin{array}{rl}
& \displaystyle \int_K \phi(\varpi)df_t(\varpi)  \quad \\
 & \quad =  \displaystyle \int_K \phi(\varpi)df_0(\varpi)
 + \int_0^t \int_{K^2\times B} (\phi(\varpi')-\phi(\varpi))\,df_s(\varpi)df_s(\varpi_*)d\theta(\eta)ds,
\end{array}
 \end{equation}
for any $\phi \in C(K)$.
\end{thm}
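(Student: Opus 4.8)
The plan is to recast \eqref{BoltzDebil} as an abstract ODE in the Banach space $(M(K),\|\cdot\|)$, solve it by a contraction argument, and then recover nonnegativity and conservation of mass so that the solution lives in $P(K)$ for all time.

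First I would introduce the collision operator $Q\colon M(K)\times M(K)\to M(K)$ defined weakly by
\[
\langle Q(f,g),\phi\rangle=\int_{B}\int_{K^{2}}\big(\phi(\varpi')-\phi(\varpi)\big)\,df(\varpi)\,dg(\varpi_{*})\,d\theta(\eta),\qquad \phi\in C(K),
\]
where $\varpi'=(w',p,q)$ depends measurably on $(\varpi,\varpi_{*},\eta)$ through \eqref{Regla}. The support restriction on $\eta$ discussed above guarantees $\varpi'\in K$, so the gain part $Q^{+}(f,g)$ is the push-forward of $f\otimes g\otimes\theta$ under $(\varpi,\varpi_{*},\eta)\mapsto\varpi'$, the loss part is $Q^{-}(f,g)=g(K)\,f$, and both are genuine finite measures; since $\theta$ is a probability measure and $|\phi(\varpi')-\phi(\varpi)|\le 2\|\phi\|_{\infty}$, the functional above is bounded by $2\|\phi\|_{\infty}\|f\|\,\|g\|$, so by the Riesz representation theorem $Q(f,g)$ is a well-defined element of $M(K)$ with $\|Q(f,g)\|\le 2\|f\|\,\|g\|$. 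Bilinearity then yields the local Lipschitz estimate $\|Q(f,f)-Q(g,g)\|\le 2\big(\|f\|+\|g\|\big)\,\|f-g\|$.

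With this in hand, \eqref{BoltzDebil} is exactly the mild formulation $f_{t}=f_{0}+\int_{0}^{t}Q(f_{s},f_{s})\,ds$ of $\dot f_{t}=Q(f_{t},f_{t})$, and I would solve it by Picard iteration on the complete metric space $X_{\tau}=\{f\in C([0,\tau],M(K)):\ \sup_{t\le\tau}\|f_{t}-f_{0}\|\le 1\}$: for $\tau$ small depending only on $\|f_{0}\|=1$, the map $\Phi(f)_{t}=f_{0}+\int_{0}^{t}Q(f_{s},f_{s})\,ds$ sends $X_{\tau}$ into itself and is a contraction there by the estimates above, which gives a unique local solution, and continuity of $s\mapsto Q(f_{s},f_{s})$ upgrades it to $C^{1}$ with $\dot f_{t}=Q(f_{t},f_{t})$; this is the scheme of Chapter~6 of \cite{CI} transplanted to our kernel. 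To see that the solution remains in $P(K)$, I would take $\phi\equiv 1$ in \eqref{BoltzDebil} to get $\langle Q(f,f),1\rangle=0$, hence $f_{t}(K)=f_{0}(K)=1$; then $Q^{-}(f_{t},f_{t})=f_{t}$, the equation reads $f_{t}=e^{-t}f_{0}+\int_{0}^{t}e^{-(t-s)}Q^{+}(f_{s},f_{s})\,ds$, and since $Q^{+}$ maps $M_{+}(K)$ into $M_{+}(K)$ (being a push-forward) the iterates $f^{(0)}\equiv f_{0}$, $f^{(n+1)}_{t}=e^{-t}f_{0}+\int_{0}^{t}e^{-(t-s)}Q^{+}(f^{(n)}_{s},f^{(n)}_{s})\,ds$ stay nonnegative and converge to $f$, so $f_{t}\in M_{+}(K)$, and together with $f_{t}(K)=1$ this gives $f_{t}\in P(K)$ and $\|f_{t}\|=1$. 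Since the local existence time $\tau$ depends only on the conserved quantity $\|f_{0}\|$, the solution extends to all of $[0,+\infty)$ by the usual continuation argument, and uniqueness in $C^{1}([0,\infty),P(K))$ follows from uniqueness of the fixed point (equivalently, from Gronwall applied to the difference of two solutions via the Lipschitz bound of $Q$).

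I expect the fixed point part to be routine, precisely because the kernel $\beta$ has been normalized to be bounded; the step requiring the actual structure of the model will be the propagation of nonnegativity, which cannot be read off the $M(K)$-contraction and is exactly where the splitting $Q=Q^{+}-Q^{-}$, conservation of mass, and the integrating factor $e^{-t}$ come in. A second, purely measure-theoretic point to check carefully is that $(\varpi,\varpi_{*},\eta)\mapsto\varpi'$ is Borel and takes values in $K$, which is where the assumption on the support of $\eta$ (tied to the choice of the function $D$) is used.
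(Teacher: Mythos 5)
Your proposal is correct and follows essentially the same route as the paper: the same bilinear operator $Q$ with the bounds $\|Q(f,g)\|\le 2\|f\|\|g\|$ and $\|Q(f,f)-Q(g,g)\|\le 2\|f+g\|\|f-g\|$, a Banach fixed point for the mild formulation on a short time interval, conservation of mass via $\phi\equiv 1$, propagation of nonnegativity through the gain--loss splitting with the integrating factor, and continuation using that the local existence time depends only on $\|f_0\|=1$. The only (immaterial) difference is in the positivity step, where you iterate the Duhamel formula $f_t=e^{-t}f_0+\int_0^t e^{-(t-s)}Q^{+}(f_s,f_s)\,ds$ directly, whereas the paper proves a separate uniqueness proposition for the modified equation $\p_t g+\lambda g=Q(g,g)+\lambda g\int_K dg$ with $\lambda\ge 1$ and a monotone iteration started from $g^0=0$; both hinge on the same observation that the gain term preserves $M_+(K)$.
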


\subsection{Grazing Limit}

Given some initial condition $f_0\in P(K)$, consider the function $f$ solution to the
Boltzmann-like equation \eqref{BoltzDebil2} given by Theorem
\ref{ThmExistenceBoltzmann}. We will prove that, after an appropriate time rescaling,
the asymptotic behavior of $f(t)$ as $t\to +\infty$ is well-described when
$\gamma,\sigma\to 0$ by the solution $g\in C([0,+\infty),P(K))$ of some diffusion
equation, whose form depends on  the limit of the quotient $\frac{\sigma^2}{\gamma}$.
Namely,  it reckons on  the balance between the diffusion  strength, represented by
$\sigma$ and the tendency to an agreement, measured by the parameter  $\gamma$.
Indeed, we will see that in case they are proportional, i.e,  $\sigma^2=\gamma\lambda$
for some $\lambda>0$, then $g$ satisfies
\begin{eqnarray}\label{Fokker-planck}
\begin{array}{rl}
  \frac{\displaystyle d}{\displaystyle d\tau} \displaystyle \int_K  \phi(\varpi)g_\tau(\varpi) =
 & \displaystyle \int_K \Big((m(\tau)-w) \pmean q\Big)\partial_w\phi(\varpi)\,dg_\tau(\varpi)      \\
 & \hspace{0.4cm}
   +\frac{\lambda}{2} \displaystyle \int_K q^2 D^2(|w|)\p_{ww}\phi(\varpi)\,dg_\tau(\varpi),
\end{array}
\end{eqnarray}
for any $\phi\in C^\infty(K)$. Here $m(\tau)=\int_K
\frac{p}{\pmean}w\,dg_\tau(\varpi)$, where $\pmean$ is the mean value of $p$, which
remains constant in time. In other words, $m$ is the mean opinion weighted by the
normalized  power of persuasion.

Notice that \eqref{Fokker-planck} is the weak form of the  Fokker-Planck equation
\begin{equation}\label{FPClassic}
 \p_\tau g + \p_w\Big((m(t)-w)q\pmean g\Big)
 = \frac{\lambda}{2}\p_{ww}\Big(q^2D(|w|)^2 g \Big),
\end{equation}
subject to the following boundary conditions satisfied for any $\tau>0$:
\begin{equation}\label{condi1}
(m(\tau)-w)\pmean qg_\tau(\varpi) - \frac{ \lambda}{2}\p_w
\Big(q^2D^2(|w|)g_\tau(\varpi)\Big) =0, \qquad w=\pm 1, (p,q)\in [0,1]^2
\end{equation}
\begin{equation}\label{condi2}
D(|w|)^2 \int_0^1\int_0^1 q^2g(\tau,w)\,dpdq=0, \qquad w=\pm 1.
\end{equation}
These  conditions are the result of integrating by parts assuming that $g_\tau$ is
smooth. In a wide choice of noise terms $D(\pm1)=0$ (e.g. if
$D(|w|)=1-w^2$ or $D(|w|)=1-|w|$), thus \eqref{condi2} holds  straightforward  and
\eqref{condi1} simplifies into
\begin{equation}\label{condi1bis}
 (m(\tau)-w)\pmean qg_\tau(\varpi) = 0, \qquad w=\pm 1, p\in [0,1], q\in [0,1].
\end{equation}

The left-hand side of \eqref{FPClassic} corresponds to a transport equation describing
the tendency to agreement in the interacting rules.  It amounts to a transport towards
the mean opinion $m$ with a velocity being proportional to $q\pmean $, the product between
the tendency of an agent to change his opinion and the mean power of persuasion. The
right-hand side of \eqref{FPClassic} is a diffusion term representing the possibility
for an agent of changing his opinion under the influence of random external factors.

\medskip

Notice that the limit equation \eqref{FPClassic} has  both a diffusion and a transport
term according to the assumption $\frac{\sigma^2}{\gamma}\to\lambda>0$. If we suppose
instead that $\frac{\sigma^2}{\gamma}\to 0$ or that $\frac{\sigma^2}{\gamma}\to
+\infty$, the limit equation has only the transport term or the diffusion term,
respectively.

Namely, if  $\frac{\sigma^2}{\gamma}\to 0$, the limit equation turns out to be
$$   \int_K  \phi \,dg_\tau = \int_K \phi \,df_0
    + \int_0^\tau \int_K (m(\tau)-w)\pmean q\phi_w(\varpi) \, dg_s(\varpi) ds,  $$
    which is the weak formulation to the transport equation
$$
\p_tf(w,p,q) + \p_w((m_t-w)q \pmean f(w,p,q))=0.
$$

We are interested here in this case, and the above mentioned facts regarding the grazing limit are summarized in  the Supplement.
We provide a full detailed proof based on the arguments in \cite{T}.

\section{Asymptotic behavior of the Fokker-Planck equation without noise}

This section is concerned with   the asymptotic behavior as $t\to +\infty$ of  solutions to the
Fokker-Planck equation
 \begin{equation}\label{FPSinRuido}\p_tf(w,p,q) + \p_w((m_t-w)q \pmean f(w,p,q))=0,
\end{equation}
or its weak form \eqref{FPSinRuidoDebil}. This equation arises when in the grazing limit dominates the transport term, namely  $\frac{\sigma^2}{\gamma}\to 0$, see Theorem~\ref{ThmGrazingLimit}.

Recall that $\pmean $ is the mean value of  the persuasion power $p$, which remains
constant in time and that $m(t)=\int_K \frac{p}{\pmean}w\,df_t(\varpi)$ is the
weighted mean opinion.

The following observation ensures the uniqueness of solutions to \eqref{FPSinRuidoDebil}.
\begin{rem} \label{RemCCR}
Given $f_0\in P(K)$ and $f\in C([0,+\infty),P(K))$, $f(0)=f_0$,  it is easily seen
that the vector-field
$$E(t,\varpi):=v[f_t](\varpi)
:=\Big(\Big[\int\frac{pw}{\pmean}\,df_t(\varpi)-w\Big]q\pmean,0,0\Big) = ((m(t)-w)q
\pmean,0,0),$$ where $\pmean = \int_Kp\,df_0(\varpi)$, satisfies the following:
\begin{enumerate}
\item $E$ is continuous in $(t,\varpi)$, \item $|E(t,\varpi)|\le C$ for any
$(t,\varpi)$, \item $|E(t,\varpi)-E(t,\varpi')|\le C|\varpi-\varpi'|$ for any
$t,\varpi,\varpi'$.
\end{enumerate}
Moreover if  $g\in C([0,+\infty),P(K))$, $g(0)=f_0$, then
$$ \max_{\varpi\in K} |v[f_t](\varpi)-v[g_t](\varpi)|\le CW_1(f_t,g_t), $$
for any $t\ge 0$.

Invoking the theory developed in \cite{CCR} by Ca\~nizo, Carrillo and Rosado, we can
ensure that the equation $\p_tf+\text{div}(v[f_t](\varpi)f_t)=0$,  which is exactly
\eqref{FPSinRuido}, has a unique solution in
$C([0,+\infty),P(K))$ with initial condition $f_0$.
\end{rem}

The long time behaviour of the solution  will be accomplished by rewriting equation (\ref{FPSinRuido}) in a simpler
form due to Li and Toscani \cite{LT}. To apply this idea we need to bear in mind some facts
about the generalized inverse of the cumulative distribution function of a probability
measure. Only measures supported in $[-1,1]$ will be considered,  since this is the
case of interest in this paper, see the next subsection.

\subsection{A change of variable}

Let $f\in P([-1,1])$. The cumulative distribution function (cdf) $F:\R\to [0,1]$ of
$f$ is defined as $F(x)=f((-\infty,x])$. Notice that $F$ is non-decreasing and
right-continuous with left limit.

The generalized inverse of $F$ is defined as $F^{-1}:[0,1]\to [-1,1]$
\begin{equation}\label{DefGenInverse}
F^{-1}(\rho)= \inf\, \{ x\in [-1,1] \mbox{ s.t. } F(x)\ge \rho \}.
\end{equation}
Observe that $F^{-1}$ is non-decreasing, left-continuous with right limit in $(0,1]$
and
\begin{equation}\label{PropGenInverse1}
 [F^{-1}(0^+),F^{-1}(1)] \supset \text{supp}\,f.
\end{equation}
Furthermore, for any $x\in [-1,1]$ and any $\rho\in [0,1]$ the following inequalities
hold:
\begin{equation}\label{PropGenInverse2}
  \text{ If } F(x)>0 \text{ then } F^{-1}(F(x))\le x  \text{ while } F(F^{-1}(\rho))\ge \rho.
\end{equation}
See the note of Embrechts and Hofert \cite{EH} for the above (and further) properties of
$F^{-1}$.


The use of the generalized inverse enables us to  rewrite an equation like
\eqref{FPSinRuido} in terms of the generalized inverse of the cdf of $f_t$, and the
resulting equation is usually much simpler. More precisely, consider  $f\in
C([0,\infty);P([-1,1]))$ and let $F_t$ be the cdf of $f_t$ and $X_t=F_t^{-1}$ its
generalized inverse. Then, it can be proved that
\begin{equation}\label{PropGenInverse5}
 \int_0^1 \phi(X_t(r))\,dr = \int_{-1}^{1} \phi(w)\,df_t(w),
 \end{equation}
for any $\phi$ integrable (to prove this identity it suffices to check the formula for
$\phi$ of the form $1_{(-\infty,a]}$, $a\in\R$). This change of variables formula is the key of the next result.

\begin{prop} \label{EquGenInv}
Let $v:[0,+\infty)\times [-1,1]\to \R$ be continuous and globally Lipschitz with respect to the
second variable. Then $f \in C([0,+\infty),P([-1,1]))$ is a weak solution of
\begin{equation}\label{GenInvEqu1}
  \p_tf_t + \p_x(v(t,x)f_t) = 0,
\end{equation}
in the sense that for any $\phi\in C^\infty([-1,1])$ and any $t>0$,
\begin{equation}\label{GenInvEqu1Weak}
  \int_{-1}^1 \phi(x)\,df_t(x) =  \int_{-1}^1 \phi(x)\,df_0(x)
+ \int_0^t \int_{-1}^1 \phi'(x)v(s,x)\,df_s(x)ds,
\end{equation}
if and only if for any $r\in (0,1]$, $X_t(r)$ is a solution of
\begin{equation}\label{GenInvEqu2}
  \p_tX_t(r) = v(t,X_t(r)).
\end{equation}
Here $X_0$ is the generalized inverse of $F_0$ (the cdf of $f_0$).
\end{prop}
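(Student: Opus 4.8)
The plan is to prove the equivalence by exploiting the change-of-variables formula \eqref{PropGenInverse5} in both directions, using the characterization of weak solutions to the continuity equation through the flow of the vector field $v$. Since $v$ is continuous in $t$ and globally Lipschitz in $x$, the ODE $\dot y = v(t,y)$ has, for each initial datum, a unique global solution (note $v$ maps into $\R$ but the interval $[-1,1]$ will be preserved by the solutions we care about because $f_t$ stays a probability measure on $[-1,1]$ — this needs a word, see below). The standard theory (e.g.\ \cite{CCR}, which the paper has already invoked in Remark~\ref{RemCCR}) tells us that the unique weak solution of \eqref{GenInvEqu1} with initial datum $f_0$ is the push-forward $f_t = (\Phi_t)_\# f_0$, where $\Phi_t$ is the flow map of $v$. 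So the real content is to show that pushing forward $f_0$ by $\Phi_t$ is the same thing, at the level of generalized inverses, as evolving $X_0$ by \eqref{GenInvEqu2}.

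First I would establish the ``if'' direction. Suppose $X_t(r)$ solves \eqref{GenInvEqu2} for a.e.\ $r\in(0,1]$ with $X_0 = F_0^{-1}$. Fix $\phi\in C^\infty([-1,1])$. Using \eqref{PropGenInverse5} at time $t$ and at time $0$, and differentiating under the integral sign in $r$ (justified by the uniform Lipschitz bound on $v$, which gives $|\p_t X_t(r)| \le \|v\|_\infty$ and hence equi-Lipschitz control in $t$), I compute
\begin{equation}\label{ProofProp-if}
\frac{d}{dt}\int_{-1}^1 \phi(w)\,df_t(w) = \frac{d}{dt}\int_0^1 \phi(X_t(r))\,dr = \int_0^1 \phi'(X_t(r))\,\p_t X_t(r)\,dr = \int_0^1 \phi'(X_t(r))\,v(t,X_t(r))\,dr,
\end{equation}
and applying \eqref{PropGenInverse5} once more with the integrand $\phi'(\cdot)v(t,\cdot)$ turns the last expression into $\int_{-1}^1 \phi'(x) v(t,x)\,df_t(x)$. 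Integrating \eqref{ProofProp-if} in time yields exactly \eqref{GenInvEqu1Weak}, so $f$ is a weak solution. One should also check that $t\mapsto X_t(r)$ being the flow of $v$ keeps each $X_t$ non-decreasing and left-continuous in $r$ (monotonicity is preserved because two solutions of \eqref{GenInvEqu2} cannot cross, by uniqueness; left-continuity in $r$ follows from continuous dependence on initial data), so that $X_t$ is genuinely the generalized inverse of the cdf of $f_t$, and that $f_t\in P([-1,1])$, i.e.\ $X_t(r)\in[-1,1]$; this last point uses that the weak solution we have produced is automatically a probability measure supported in $[-1,1]$ because the flow preserves $f_0$'s mass and, at the endpoints, the relevant solutions stay in the interval (or one simply notes $f_t$ is a push-forward of $f_0$).

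For the ``only if'' direction I would argue by uniqueness. Given a weak solution $f\in C([0,\infty),P([-1,1]))$ of \eqref{GenInvEqu1} with initial datum $f_0$, define $\tilde X_t(r)$ to be the solution of the ODE \eqref{GenInvEqu2} with $\tilde X_0 = F_0^{-1}$; by the ``if'' direction just proved, $\tilde X_t$ is the generalized inverse of the cdf of some weak solution $\tilde f_t$ with the same initial datum. But weak solutions of \eqref{GenInvEqu1} with a given initial datum are unique — this is precisely the regularity hypothesis on $v$ together with the theory of \cite{CCR} (the same argument sketched in Remark~\ref{RemCCR}, now with a time-dependent but uniformly Lipschitz and bounded field). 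Hence $f_t = \tilde f_t$ for all $t$, so their cdf's coincide, so their generalized inverses coincide: $X_t = \tilde X_t$, which means $X_t$ solves \eqref{GenInvEqu2}.

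The main obstacle I anticipate is the bookkeeping around the generalized inverse: one must be careful that ``$X_t$ solves the ODE for each $r\in(0,1]$'' is compatible with $X_t$ remaining a bona fide generalized inverse (monotone, left-continuous, with the correct range $[-1,1]$), and that differentiation under the integral in \eqref{ProofProp-if} is legitimate. Both are handled by the global Lipschitz/boundedness hypothesis on $v$, which yields uniqueness of trajectories (hence preservation of monotonicity and non-crossing) and uniform-in-$t$ Lipschitz bounds on $X_t(r)$ (hence dominated convergence for the difference quotient). A secondary subtlety is invoking \cite{CCR} for a time-dependent vector field; but the hypotheses there — continuity in $(t,x)$, uniform bound, uniform Lipschitz constant, and the Wasserstein-stability estimate $\max_x|v(t,x)-v(t,x')|\le$ (nothing, since $v$ here does not depend on the measure) — are all satisfied, so uniqueness of weak solutions holds and closes the argument.
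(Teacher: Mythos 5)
Your proof is correct, and the ``if'' direction is exactly the paper's computation: differentiate $\int_0^1\phi(X_t(r))\,dr$ using \eqref{PropGenInverse5} and the ODE. The ``only if'' direction, however, follows a genuinely different route. The paper works directly with the cumulative distribution function: it shows that $F_t$ solves the transport equation \eqref{EquF}, represents $F_t$ along characteristics as $F_t(\phi_t(x))=F_0(x)$ for the flow $\phi_t$ of $v$, and then proves $X_t(r)=\phi_t(X_0(r))$ by a two-sided comparison using the generalized-inverse inequalities \eqref{PropGenInverse2}. You instead run a construct-and-identify argument: evolve $X_0=F_0^{-1}$ by the ODE, check that the result stays a bona fide generalized inverse (monotone by non-crossing of trajectories, left-continuous by continuous dependence), push forward Lebesgue measure to get a second weak solution with the same initial datum, and conclude by uniqueness of measure-valued weak solutions of \eqref{GenInvEqu1} for a bounded, continuous, uniformly Lipschitz field (the measure-independent special case of \cite{CCR}). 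Your version outsources the hard step to a known well-posedness theorem and in exchange must verify the generalized-inverse bookkeeping, which you do; the paper's version is more self-contained and elementary, never invoking uniqueness of the continuity equation explicitly (though its assertion that ``$F$ is determined by $F_t(\phi_t(x))=F_0(x)$'' is a uniqueness statement in disguise). Both arguments share the same minor unaddressed point, namely that the flow of $v$ a priori need not preserve $[-1,1]$ for a general $v$ as in the statement, so neither is at a disadvantage there.
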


The proof can  be found essentially in Theorem 3.1 of Ref. [1]. However, we rewrite it
here under the point of view of the ordinary  equation for the flux
(\ref{GenInvEqu2}).

\begin{proof}
Assume that $X_t$ satisfies \eqref{GenInvEqu2}. Thanks to  \eqref{PropGenInverse5},
for any smooth $\phi$ we have that
\begin{eqnarray*}
\frac{d}{dt}\int_{-1}^1 \phi(x)\,df_t(x) & = & \frac{d}{dt}\int_0^1 \phi(X_t(r))\,dr
= \int_0^1 \phi'(X_t(r))v(t,X_t(r))\,dr  \\
& = & \int_{-1}^1 \phi'(x)v(t,x)\,df_t(x),
\end{eqnarray*}
which easily implies \eqref{GenInvEqu1Weak}.

Reciprocally, suppose that $f$ solves \eqref{GenInvEqu1}. By Fubini's theorem,
\begin{eqnarray*}
 \int_{-1}^1 \phi(x) F_t(x)dx
 & = & \int_{-1}^1 \phi(x) \int 1_{(-\infty,x]}(y)\,df_t(y) dx  \\
& = & \int_{-1}^1 \Big( \int_y^{+\infty} \phi(x)\,dx \Big) \,df_t(y).
\end{eqnarray*}
Differentiating with respect to time and taking into account  \eqref{GenInvEqu1} yield
$$ \frac{d}{dt}   \int_{-1}^1 \phi(x) F_t(x)dx = -\int_{-1}^1 \phi(y) v(t,y)\,df_t(y). $$
Moreover, $\p_x F_t=f_t$ in the distributional sense. Thus $F$ is a weak solution of
the transport equation
\begin{equation}\label{EquF}
\p_t  F_t + v(t,x) \p_xF_t = 0.
\end{equation}
Let $\phi_t(x)$ be the flow of $v$, i.e. the solution to
$\p_t\phi_t(x)=v(t,\phi_t(x))$, starting at $\phi_0(x)=x$. Then, as usual $F$ is
determined by the relation $F_t(\phi_t(x)) = F_0(x)$. It is now simple to conclude that
\eqref{GenInvEqu2} holds i.e. that $X_t(r)=\phi_t(X_0(r))$.

First  $F_t(\phi_t(X_0(r)))=F_0(X_0(r))$ which is greater than $r$ by
\eqref{PropGenInverse2}. Hence, for any $t$,  $\phi_t(X_0(r)) \ge X_t(r)$.

Conversely, for any $x<X_0(r)$ we have $F_0(x)<r$ so that $F_t(\phi_t(x))<r$ and then
$\phi_t(x)<X_t(r)$ by definition of $X_t(r)$. Letting $x\to X_0(r)$ we obtain
$\phi_t(X_0(r))\le X_t(r)$.

The proof is finished.
\end{proof}

\subsection{Conditional distributions}

 Another useful tool to achieve the asymptotic analysis is the concept of conditional distribution.

Let  $X,Y$ be two random variables defined over the same probability space with values
in $\R^d$ and $\R^k$, respectively,  and denote by $P_X$ and $P_{(X,Y)}$ the
distributions of $X$ and $(X,Y)$. Then there exists a map $\nu:(x,B) \in \R^d\times
\mathcal{F}(\R^k) \to \nu(x,B)\in [0,1]$ (where $\mathcal{F}(\R^k) $ is the Borel
$\sigma$-field) such that:
\begin{itemize}
\item[i)] $\nu(x,.)\in P(\R^k)$ for any $x\in \R^d$, \item [ii)] $\nu(.,B)$ is
measurable for any $B\subset \R^k$ Borel, \item[ iii)] $P(X\in A;Y\in B) =  \int_A
\nu(x,B)\,dP_X(x)$ for any $A\subset \R^d$, $B\subset \R^k$ Borel.
\end{itemize}
We write $\nu(x,B)=P(Y\in B|X=x)$.

The following Fubini formula holds: for any $\phi:X\times Y\to \R$
$P_{(X,Y)}$-integrable, the function $x\to \int_Y \phi(x,y)\, P(dy|X=x)$ is measurable
and
\begin{equation}\label{FubiniCondDistrib}
\int_{X\times Y}  \phi\,dP_{(X,Y)} = \int_X \Big( \int_Y \phi(x,y)\,
P(dy|X=x)\Big)\,dP_X(x).
\end{equation}
Of course the same results can be written in terms of a probability measure $\mu\in
P(\R^d\times \R^k)$ and its marginal in $\R^d$, $\mu_1$. In that case we let
$\mu_{|x}:=\nu(x,.)$ and   any $\mu$-integrable $\phi$  satisfies
\begin{equation}\label{FubiniCondDistrib2}
\int_{X\times Y}  \phi\,d\mu = \int_X \Big( \int_Y \phi(x,y)\,\mu_{|x}(y)
\Big)\,d\mu_1(x).
\end{equation}
The existence of $\nu$ is guaranteed  by Jirina's theorem. There are several classical
references in this subject, see for details \cite{Ash,CB,Kree,Stroock}.

\medskip

\subsection{The asymptotic behavior of solutions}

We are now ready to analyze how the interaction of stubborn agents   with those more
likely to change their opinions affects  the population's opinion dynamics. Indeed,
the agents with fixed opinion will drag the opinion of the rest of the individuals to
certain average of their own initial distribution, no matter the initial distribution
considered for the whole population, see Theorems \ref{ThmAymptoticConFirmes} and
\ref{TeoAsymptoticGeneral} below.

On the contrary, the asymptotic behavior when $q>0$ for every agent, which will be
studied in a future work, takes into account the values of the initial distribution of
every individual.

 Precisely, we consider an initial distribution $f_0\in P(K)$ of the form
\begin{equation}\label{InitialDistribConFirmes}
 f_0(w,p,q)dwdpdq = \alpha_0 f_0^0(w,p)dwdp\otimes \delta_{q=0}
                            + (1-\alpha_0) f_0^1,
\end{equation}
for some $\alpha_0\in (0,1]$, where $f_0^1\in P(K)$ is supported in $\{q\ge\eps\}$ for
some $\eps>0$, and $f_0^0$ is a probability measure on $[-1,1]\times [0,1]$. This
means that there exists a positive fraction $\alpha_0$ of stubborn people whose
opinion is distributed according to $f_0^0$, and that the parameters $(w,p,q)$ of the
rest of the population  verify $q\ge \eps$ and are determined by $f_0^1$.

Notice that the dynamics described in \eqref{Regla} deny  changes in  $(p,q)$ for
each agent and in consequence, the solution $f_t$ of \eqref{FPSinRuidoDebil} with
initial condition $f_0$ given in \eqref{InitialDistribConFirmes} will have the form
\begin{equation}\label{FormDistribConFirmes}
 f_t(w,p,q)dwdpdq = \alpha_0 f_0^0(w,p)dwdp\otimes \delta_{q=0} + (1-\alpha_0) f_t^1(w,p,q).
\end{equation}

We prove that in this case the non-stubborn agents share asymptotically the same
opinion $m_\infty$, which is completely determined by the opinion of the stubborn
individuals. Indeed, we shall see that $m_\infty$ is the mean opinion of the stubborn
people weighted by their persuasion power.

The occurrence of this fact is specially well observed if we assume that the marginal
in $(p,q)$ of the distribution of the opinion among the non-stubborn population,
$f_0^1(p,q)dpdq$, is given by a finite convex combination
$\sum_{i=1}^N \alpha_i\delta_{p=p_i,q=q_i}$  of Dirac masses. As we will see, the analysis of the
asymptotic behavior of the opinion distribution $f_t^i(w)dw$ of the population with
$(p,q)=(p_i,q_i)$
 can be conveniently  reduced to the study of a linear system of ordinary equations $M'=AM+B$ in $\R^N$.

It is known (see \cite{BGV}) that any probability measure $\mu\in P(\R^d)$ can be
approximated with high probability by the empirical measure $\hat \mu^N:=
\frac{1}{N}\sum_{i=1}^N \alpha_i\delta_{X_i}$, being $X_1,..,X_N$ $N$ random variables
identically distributed with law $\mu$. Then, it is  reasonable to think that the
results obtained for the discrete model enlighten the asymptotic behavior of the
general case, as it indeed occurs.

Therefore, we first examine the simplified discrete system to provide us with some
intuition, before  accomplishing the proof for any general initial distribution given
by (\ref{InitialDistribConFirmes}). This is the core of the following theorem.

\begin{thm} \label{ThmAymptoticConFirmes}
Let $f_0\in P(K)$ be an initial distribution defined as in
\eqref{InitialDistribConFirmes}, where
  the initial distribution $f_0^1$  of the variables $(w,p,q)$ corresponding to the non-stubborn population  has the form
\begin{equation} \label{Approximatef01}
 f_0^1 = \sum_{i=1}^N \alpha_i  g_0^i(w)dw \otimes \delta_{p=p_i, q=q_i},
\end{equation}
being $N\in \mathbb{N}$, $\alpha_1,..,\alpha_N> 0$ with $\alpha_1+..+\alpha_N=1$,
$q_1,..,q_N\in [\eps,1]$ all distinct, $p_1,..,p_N\in (0,1]$, and $g_0^1,..,g_0^N\in
P([-1,1])$. Its evolution in time,  $f_t^1$, verifies
\begin{equation}\label{ConvStubbornSimplified}
W_1(f_t^1,\delta_{m_0^0}\otimes \sum_{i=1}^N \alpha_i\delta_{p=p_i,q=q_i})\to 0,
\qquad \text{as $t\to +\infty$.}
\end{equation}
Here  $m_0^0$ denotes the mean opinion of the stubborn people weighted by the power of
persuasion, namely
\begin{equation}\label{DefMeanOpinionStubborn}
m_0^0 = \frac1{\pmean_{q=0}}\int_{-1}^1\int_0^1 pw\,df_0^0(w,p).
\end{equation}
\end{thm}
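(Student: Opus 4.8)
The plan is to reduce the transport PDE \eqref{FPSinRuido} to a finite linear ODE system for the barycenters of the opinion slices, exactly as announced before the statement. Because the interaction rules leave $(p,q)$ frozen, the solution keeps the fibered form of \eqref{Approximatef01}--\eqref{FormDistribConFirmes}, namely $f_t^1=\sum_{i=1}^N\alpha_i\,g_t^i(w)\,dw\otimes\delta_{p=p_i,q=q_i}$ with $g_t^i\in C([0,+\infty),P([-1,1]))$, and on the fibre $(p,q)=(p_i,q_i)$ the transport velocity is $(m_t-w)q_i\pmean$, so $g_t^i$ is a weak solution of $\partial_t g_t^i+\partial_w\big((m_t-w)q_i\pmean\,g_t^i\big)=0$. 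Testing against $\phi(w)=w$, the barycenters $M_i(t):=\int_{-1}^1 w\,dg_t^i(w)$ satisfy $M_i'(t)=q_i\pmean\big(m_t-M_i(t)\big)$. Since the stubborn part of $f_t$ never moves, $m_t$ is an affine function of $(M_1,\dots,M_N)$: setting $M^0:=\int_{-1}^1\int_0^1 pw\,df_0^0(w,p)$, which by \eqref{DefMeanOpinionStubborn} equals $\pmean_{q=0}\,m_0^0$, and using $\pmean=\alpha_0\pmean_{q=0}+(1-\alpha_0)\sum_j\alpha_j p_j$, one gets $m_t=\pmean^{-1}\big(\alpha_0 M^0+(1-\alpha_0)\sum_j\alpha_j p_j M_j(t)\big)$. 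Substituting yields the closed linear system $M'=AM+B$ with $A_{ij}=q_i(1-\alpha_0)\alpha_j p_j-q_i\pmean\,\delta_{ij}$ and $B_i=q_i\alpha_0 M^0$.

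A short computation using the identity for $\pmean$ and the definition \eqref{DefMeanOpinionStubborn} of $m_0^0$ shows that the constant vector $M^*:=m_0^0\,(1,\dots,1)^\top$ solves $AM^*+B=0$; this is precisely what forces the common limit opinion to be the weighted mean $m_0^0$. Setting $Y:=M-M^*$ we have $Y'=AY$, and everything hinges on showing that $A$ is Hurwitz. Writing $A=-\pmean\,\mathrm{diag}(q_1,\dots,q_N)+d\,v^\top$, a rank-one perturbation of a negative diagonal matrix, with $d=(q_1,\dots,q_N)^\top$ and $v=\big((1-\alpha_0)\alpha_1 p_1,\dots,(1-\alpha_0)\alpha_N p_N\big)^\top$, the matrix determinant lemma shows that any eigenvalue $\mu\notin\{-\pmean q_1,\dots,-\pmean q_N\}$ must satisfy the secular equation $g(\mu):=\sum_{j=1}^N\frac{(1-\alpha_0)\alpha_j p_j q_j}{\pmean q_j+\mu}=1$, and that no $-\pmean q_i$ is itself an eigenvalue (here one uses $p_i>0$, $q_i>0$ and the $q_i$ pairwise distinct). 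Ordering $0<q_1<\dots<q_N$, the rational function $g$ is strictly decreasing on each of the $N+1$ intervals cut out by the negative poles $-\pmean q_j$: it stays negative to the left of all poles, it crosses the level $1$ exactly once inside each of the $N-1$ bounded gaps between consecutive poles --- all of which lie to the left of $-\pmean q_1<0$ --- and it crosses $1$ exactly once in $(-\pmean q_1,+\infty)$, necessarily at a negative $\mu$ because $g(0)=1-\alpha_0\pmean_{q=0}/\pmean<1$ (this uses $\pmean_{q=0}>0$, implicit in the definition of $m_0^0$). Since the characteristic polynomial of $A$ has degree $N$, these $N$ negative reals are its entire spectrum. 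Hence $Y(t)\to 0$ exponentially; in particular $M_i(t)\to m_0^0$ for every $i$, and therefore $m_t\to m_0^0$.

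It remains to upgrade convergence of the barycenters to weak convergence of the measures $g_t^i$. The velocity field $(t,w)\mapsto(m_t-w)q_i\pmean$ is continuous in $t$ (continuity of $m_t$ follows e.g. from Remark~\ref{RemCCR}) and globally Lipschitz in $w$, hence admits a flow $\Phi_t^i$; two of its trajectories differ by the factor $e^{-q_i\pmean t}$, and $g_t^i$ is the push-forward of $g_0^i$ under $\Phi_t^i$ (the method of characteristics, cf. Proposition~\ref{EquGenInv}), while $M_i(t)=\int_{-1}^1\Phi_t^i(w)\,dg_0^i(w)$ is the push-forward of the barycenter. A convexity estimate gives $W_1(g_t^i,\delta_{M_i(t)})\le\iint|w-w'|\,e^{-q_i\pmean t}\,dg_0^i(w)\,dg_0^i(w')\le 2\,e^{-q_i\pmean t}$, so $W_1(g_t^i,\delta_{m_0^0})\le 2\,e^{-q_i\pmean t}+|M_i(t)-m_0^0|\to 0$. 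Finally, $f_t^1$ and $\delta_{m_0^0}\otimes\sum_i\alpha_i\delta_{p=p_i,q=q_i}$ have the same marginal $\sum_i\alpha_i\delta_{p=p_i,q=q_i}$ in $(p,q)$, so gluing fibre by fibre the optimal transport plans between $g_t^i$ and $\delta_{m_0^0}$ produces a coupling witnessing $W_1\big(f_t^1,\delta_{m_0^0}\otimes\sum_i\alpha_i\delta_{p=p_i,q=q_i}\big)\le\sum_i\alpha_i\,W_1(g_t^i,\delta_{m_0^0})\to 0$, which is \eqref{ConvStubbornSimplified}.

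The step I expect to be the main obstacle is the spectral analysis of $A$: locating the roots of the secular equation $g(\mu)=1$ relative to the negative poles $-\pmean q_j$, and thereby ruling out eigenvalues with nonnegative real part. Once $A$ is known to be Hurwitz, the remaining ingredients --- the ODE for the barycenters, the contraction of characteristics, and the gluing of transport plans --- are routine, and they also yield the explicit exponential rate of convergence referred to in the introduction.
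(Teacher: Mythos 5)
Your proposal is correct and its skeleton coincides with the paper's: the same fibered decomposition $f_t^1=\sum_i\alpha_i g_t^i(w)dw\otimes\delta_{p=p_i,q=q_i}$, the same linear system $M'=AM+B$ for the barycenters with the same equilibrium $-A^{-1}B=m_0^0(1,\dots,1)^\top$, concentration of each $g_t^i$ onto its barycenter at rate $e^{-q_i\pmean t}$, and a fibrewise gluing/convexity bound for $W_1$ at the end. Your fibre-concentration step (contraction of characteristics of the flow, then averaging the transport plan $(\Phi_t^i\times\Phi_t^i)\sharp(g_0^i\otimes g_0^i)$) is essentially the paper's argument in different clothing: the paper works with the generalized inverse $X_t^i$ of the cdf (Proposition~\ref{EquGenInv}) and bounds $X_t^i(1)-X_t^i(0^+)$, which is the same monotone-flow contraction. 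The one genuinely different ingredient is the spectral analysis of $A$. The paper observes that all off-diagonal entries of $A$ are nonnegative and that each row sums exactly to $-q_i\alpha_0\pmean_{|q=0}$, so Gerschgorin's disc theorem gives in one line $\sigma(A)\subset\{\mathrm{Re}\,z\le -\eps\alpha_0\pmean_{|q=0}\}$ --- an explicit spectral gap that feeds directly into the quantitative rate of Theorem~\ref{TeoAsymptoticGeneral}. You instead write $A$ as a rank-one perturbation of $-\pmean\,\mathrm{diag}(q_i)$ and locate the roots of the secular equation $\sum_j(1-\alpha_0)\alpha_jp_jq_j/(\pmean q_j+\mu)=1$; your interlacing argument is sound (the key inequality $g(0)=1-\alpha_0\pmean_{|q=0}/\pmean<1$ plays exactly the role of the strict negativity in \eqref{signo}, and degenerates in the same way when there are no stubborn agents), and it buys you more --- the spectrum is real and simple --- but at the cost of more bookkeeping, and, contrary to what you suggest at the end, it does not hand you an explicit exponential rate without further evaluation of $g$ at a test point; the step you flagged as the main obstacle is precisely the one the paper dispatches with a one-line Gerschgorin estimate.
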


\begin{rem} An estimation of the velocity of convergence in \eqref{ConvStubbornSimplified} will be determined for the general case in Theorem~\ref{TeoAsymptoticGeneral}.

Furthermore, for an intuitive explanation for the fact that the opinion of the
non-stubborn agents converges to $m_0^0$, see Remark \ref{ExplConv} below. \end{rem}

\begin{proof}
Observe that the distribution  $f_t^1$ in \eqref{FormDistribConFirmes} has  the
form:
\begin{equation}\label{Deffti}
 f_t^1 = \sum_{i=1}^N \alpha_i g_t^i(w)dw\otimes \delta_{p=p_i,q=q_i},
\end{equation}
with $g_t^1,..,g_t^N \in P([-1,1])$. Notice that for any $i=1,..,N$,
$$ f_{t|p=p_i,q=q_i}^1 = g_t^i $$
and
\begin{equation}\label{Equfti}
 \p_t g_t^i(w) + \p_w((m_t-w)q_i \pmean g_t^i(w)) = 0,
\end{equation}
in weak formulation, namely for any $\phi\in C^1([-1,1])$,
\begin{equation}\label{WeakFormfti}
 \frac{d}{dt} \int_{-1}^{1} \phi(w)\,dg_t^i(w)
 = \int_{-1}^{1}   (m_t-w)q_i \pmean \phi'(w) \, dg_t^i(w).
\end{equation}
This follows from \eqref{FPSinRuidoDebil} extending $\phi$ to a smooth function with
support in $[-1,1]\times (p_i-\eta,p_i+\eta) \times (q_i-\eta,q_i+\eta)$ with $\eta>0$
small enough so that $(p_i-\eta,p_i+\eta) \times (q_i-\eta,q_i+\eta)$ does not contain
any other $(p_j,q_j)$.

\medskip

Let us study the behavior of $g_t^i$, $i=1,..,N$, as $t\to +\infty$. We claim that for
any $i=1,..,N$ and $t>0$,
\begin{equation}\label{FirmesMean}
 W_1(g_t^i,\delta_{m_t^i})  \le  2e^{-\eps \pmean t},
\end{equation}
where
$$ m_t^i:= \int_{-1}^1  w\,dg_t^i(w) $$
is the mean opinion of agents with $(p,q)=(p_i,q_i)$. Indeed, according to
Proposition~\ref{EquGenInv}, it follows from \eqref{Equfti} that the generalized
inverse $X_t^i$ of the cumulative distribution function corresponding to $g_t^i$
satisfies
$$ \p_t X_t^i(r) = (m_t-X_t^i(r)) q_i \pmean,  $$
for any $r\in (0,1]$. Then,
$$ \p_t (X_t^i(1)-X_t^i(r))^2  = -2q_i  \pmean  (X_t^i(1)-X_t^i(r))^2,  $$
 so that  by Gronwall's Lemma
\begin{equation}\label{Convergence200}
 X_t^i(1)-X_t^i(r) \le (X_0^i(1)-X_0^i(r))e^{-q_i \pmean t}
                           \le 2e^{-\eps \pmean t}.
\end{equation}
On the other hand, since
$m_t^i = \int_0^1 X_t^i(r)\,dr$, it holds that $X_t^i(0^+)\leq m_t^i\leq X_t^i(1)$ for
all $t$. As a result,
\begin{eqnarray*}
 W_1(g_t^i,\delta_{m_t^i})
& \le&  \int_{-1}^1 |w-m_t^i|\,dg_t^i(w) = \int_0^1 |X_t^i(r)-m_t^i|\,dr \\
&\le & |X_t^i(1)-X_t^i(0^+)|,
\end{eqnarray*}
which, combined with \eqref{Convergence200}, gives \eqref{FirmesMean}.

\medskip

 In view of  \eqref{FirmesMean}, it is natural to study the asymptotic behavior of
 $m_t^i$, $i=1,..,N$. Taking $\phi(w)=w$ in \eqref{WeakFormfti}, we obtain
$$  \frac{d}{dt} m_t^i=(m_t-m_t^i)q_i \pmean.   $$
According to \eqref{FormDistribConFirmes}  and \eqref{Deffti}, we have
\begin{eqnarray*}
 \pmean m_t
& = & \int_{-1}^1\int_0^1\int_0^1 pw\,df_t(w,p,q)  \\
& = & \alpha_0 \pmean_{|q=0}m_0^0
   + (1-\alpha_0) \sum_{j=1}^N \alpha_j p_j m_t^j,
\end{eqnarray*}
where $m_0^0$ is defined in \eqref{DefMeanOpinionStubborn}. Thus for any $i=1,..,N$,
\begin{equation} \label{SistLinealSimplificado}
\begin{split}
 \frac{1}{q_i} \frac{d}{dt} m_t^i
&=  \Big((1-\alpha_0)\alpha_i p_i - \pmean \Big) m_t^i \\
&  \hspace{0.4cm}  + (1-\alpha_0)\sum_{k=1..N,\,k\neq i} \alpha_k p_k m_t^k
        + \alpha_0 \pmean_{|q=0} m_0^0.
\end{split}
\end{equation}
Introducing $M(t):=(m_t^1,..,m_t^N)^T$, this can be rewritten as
$$ M'(t)=AM(t) + B, $$
with $B = \alpha_0 \pmean_{|q=0} m_0^0 (q_1,..,q_N)^T$ and $A=(a_{ij})_{ij}$ with
$$ a_{ij} =
\begin{cases}
q_i \Big((1-\alpha_0) \alpha_i p_i - \pmean \Big)  & \qquad \text{if $j=i$} \\
 (1-\alpha_0) q_i \alpha_j p_j    &    \qquad \text{if $j\neq i$.}
\end{cases}
$$
The solution is explicitly
$$ M(t) = e^{tA}M(0) + \int_0^t e^{(t-s)A}B\,ds
           = e^{tA}(M(0)+A^{-1}B)- A^{-1}B.   $$
Notice that for any $i=1,..,N$,
$$ a_{ii} = -q_i\Big( \pmean_{|q=0}\alpha_0
            + (1-\alpha_0)\sum_{j=1,..,N,\,j\neq i} \alpha_j p_j \Big). $$
It is then easily seen that $A(1,..,1)^T=-\alpha_0 \pmean_{|q=0}(q_1,..,q_N)^T$, which yields that
$$ A^{-1}B = -m_0^0 (1, \ldots,1)^T. $$
According to Gerschgorin's disc theorem,
$$ \sigma(A) \subset  \bigcup_{i=1}^N D\Big(a_{ii},\sum_{k=1..N,k\neq i} |a_{ik}|\Big), $$
where $\sigma(A)$ denotes the spectrum of $A$ and $D(z,r)$ the disc in the complex
plane centered at $z$ of radius $r$. Thus, for any $i=1,..,N$,
\begin{equation}\label{signo} a_{ii}+\sum_{k=1..N,k\neq i} |a_{ik}| = -q_i \pmean_{q=0}\alpha_0
    \le -\eps \alpha_0 \pmean_{q=0}, \end{equation}
which implies that
\begin{equation} \label{BoundEigenvalues}
 \sigma(A) \subset \{z\in\C:\, Re(z)\le  -\eps \alpha_0 \pmean_{q=0}\}.
\end{equation}
We thus deduce that as $t\to +\infty$, $M(t)\to -A^{-1}B=m_0^0(1,..,1)^T$
exponentially fast. This together with \eqref{FirmesMean}   is enough to obtain that
$W_1(g_t^i,\delta_{m_0^0})\to 0$ for any $i=1,\ldots,N$. We are ready now to show
\eqref{ConvStubbornSimplified}. Using that $W_1$ combines properly with convex
combinations (see \cite{V}), we have
\begin{equation}\label{Convergence300}
\begin{split}
  W_1(f_t^1,\sum_{i=1}^N \alpha_i \delta_{m_0^0}\otimes \delta_{p=p_i,q=q_i})
& \le \sum_{i=1}^N \alpha_i W_1(g_t^i \otimes  \delta_{p=p_i,q=q_i},
                                             \delta_{m_0^0}\otimes \delta_{p=p_i,q=q_i}) \\
& \le \sum_{i=1}^N \alpha_i W_1(g_t^i,\delta_{m_0^0}),
\end{split}
\end{equation}
which goes to 0 as $t\to +\infty$. This completes the proof.
\end{proof}
\begin{rem} The problem without the presence of stubborn agents will be treated in a forthcoming work. Observe that in this case
 the inequality $(\ref{signo})$ is no longer
strictly negative. Therefore, this situation requires very different arguments.
\end{rem}

We now study the general case:

 \begin{thm} \label{TeoAsymptoticGeneral}
 Assume that the initial distribution has the form
 $$ f_0 = \alpha_0 f_0^0 + (1-\alpha_0) f_0^1, $$
 where $f_0^0\in P(K)$ is supported in $\{q=0\}$ and $f_0^1\in P(K)$ is
 supported in $\{q\ge \eps_0\}$.
Admit also that the map
$$ (p,q)\in [0,1]\times [0,\eps_0]\to f^1_{0|(p,q)}\in P([-1,1]) $$
 is globally Lipschitz for the $W_1$-distance:
there exists $L>0$ such that for any $(p,q)$, $(p',q')\in [0,1]\times [\eps_0,1]$,
 \begin{equation}\label{HypLipschitzDistribCond}
  W_1(f^1_{0|(p,q)},f^1_{0|(p',q')}) \le L(|q-q'|+|p-p'|).
 \end{equation}
  Then,
 \begin{equation}\label{AsymptoticFirmeNew}
 W_1(f_t^1,f_0^1(p,q)dpdq\otimes \delta_{m_0^0})
 \le 4e^{-\alpha_0\eps_0 \pmean _{|q=0} t},
 \end{equation}
where
\begin{equation}\label{Defm00}
m_0^0:= \int \frac{p}{\pmean_{|q=0}}w\,df_0^0(w,p)
\end{equation}
 is the mean opinion weighted by the normalized persuasion
power within the group of stubborn agents. Here $\pmean_{|q=0} = \int p\,df_0^0(p)$
stands for the mean value of $p$ among the stubborn agents.
 \end{thm}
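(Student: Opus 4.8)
The plan is to mimic the strategy of Theorem~\ref{ThmAymptoticConFirmes} but work directly with conditional distributions instead of a finite sum of Dirac masses, exploiting the change-of-variable via generalized inverses. For each fixed $(p,q)$ with $q\ge\eps_0$, write $f^1_{t|(p,q)}=:g^{(p,q)}_t\in P([-1,1])$. Since the dynamics in \eqref{Regla} freeze $(p,q)$, the decomposition \eqref{FormDistribConFirmes} persists and, arguing exactly as in the passage leading to \eqref{WeakFormfti} (localizing the test function near $(p,q)$ — or more carefully, using \eqref{FubiniCondDistrib2} to disintegrate \eqref{FPSinRuidoDebil}), each $g^{(p,q)}_t$ solves the one-dimensional transport equation $\p_t g^{(p,q)}_t + \p_w\big((m_t-w)q\pmean\, g^{(p,q)}_t\big)=0$ in weak form, where $m_t=\int_K \frac{p}{\pmean}w\,df_t$ is the \emph{same} global weighted mean opinion for every $(p,q)$. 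By Proposition~\ref{EquGenInv}, the generalized inverse $X^{(p,q)}_t$ of the cdf of $g^{(p,q)}_t$ satisfies $\p_t X^{(p,q)}_t(r)=(m_t-X^{(p,q)}_t(r))q\pmean$; as in \eqref{Convergence200}, $X^{(p,q)}_t(1)-X^{(p,q)}_t(r)\le 2e^{-\eps_0\pmean t}$, hence $W_1(g^{(p,q)}_t,\delta_{m^{(p,q)}_t})\le 2e^{-\eps_0\pmean t}$ with $m^{(p,q)}_t:=\int w\,dg^{(p,q)}_t(w)$.

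Next I would track the mean opinions. Taking $\phi(w)=w$ gives $\tfrac{d}{dt}m^{(p,q)}_t=(m_t-m^{(p,q)}_t)q\pmean$, and integrating against $f^1_0(p,q)\,dpdq$ yields a closed relation. Writing $n_t:=\int m^{(p,q)}_t\,df^1_0(p,q)$ is not quite enough because of the $q$-weighting; instead the clean quantity is to subtract the limit candidate directly: set $u_t(p,q):=m^{(p,q)}_t-m_0^0$. Using $\pmean m_t=\alpha_0\pmean_{|q=0}m_0^0+(1-\alpha_0)\int p\,m^{(p,q)}_t\,df^1_0$ and $\pmean=\alpha_0\pmean_{|q=0}+(1-\alpha_0)\int p\,df^1_0$ (so that the constant $m_0^0$ reproduces itself, exactly the computation $A^{-1}B=-m_0^0(1,\dots,1)^T$ in the discrete case), one gets
\begin{equation*}
\pmean(m_t-m_0^0)=(1-\alpha_0)\int p\,u_t(p,q)\,df^1_0(p,q),
\end{equation*}
and therefore $\p_t u_t(p,q)=q\pmean(m_t-m_0^0)-q\pmean\,u_t(p,q)=q(1-\alpha_0)\int p\,u_t\,df^1_0 - q\pmean\,u_t$. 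Let $U_t:=\sup_{(p,q)}|u_t(p,q)|$ (finite since all quantities live in $[-1,1]$). Then $\big|q(1-\alpha_0)\int p\,u_t\,df^1_0\big|\le q(1-\alpha_0)(\int p\,df^1_0)U_t=q(\pmean-\alpha_0\pmean_{|q=0})U_t$, so along the extremal $(p,q)$, $\tfrac{d}{dt}|u_t|\le -q\pmean|u_t|+q(\pmean-\alpha_0\pmean_{|q=0})U_t$; taking the sup and using $q\ge\eps_0$ gives $D^+U_t\le -\alpha_0\eps_0\pmean_{|q=0}\,U_t$, hence $U_t\le U_0\,e^{-\alpha_0\eps_0\pmean_{|q=0}t}\le 2e^{-\alpha_0\eps_0\pmean_{|q=0}t}$ (since $U_0\le 2$, as both $m^{(p,q)}_0,m_0^0\in[-1,1]$).

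Finally I would assemble the $W_1$ bound. By the triangle inequality and the fact that $W_1$ is compatible with mixtures (integrating the marginal $f^1_0(p,q)\,dpdq$, cf. \eqref{Convergence300} and \cite{V}),
\begin{equation*}
W_1\big(f_t^1,\,f_0^1(p,q)dpdq\otimes\delta_{m_0^0}\big)
\le \int \Big(W_1(g^{(p,q)}_t,\delta_{m^{(p,q)}_t})+|m^{(p,q)}_t-m_0^0|\Big)\,df^1_0(p,q)
\le 2e^{-\eps_0\pmean t}+U_t,
\end{equation*}
and since $\pmean\ge\alpha_0\pmean_{|q=0}\eps_0/\eps_0$... more simply $\pmean\ge\alpha_0\pmean_{|q=0}\ge\alpha_0\eps_0\pmean_{|q=0}$ is false in general, so I would just bound both exponentials by the slower rate $e^{-\alpha_0\eps_0\pmean_{|q=0}t}$ (note $\eps_0\le q\le 1$ and $\alpha_0\le 1$ give $\alpha_0\eps_0\pmean_{|q=0}\le \pmean_{|q=0}\le$ ... ) — the cleanest route is: $\eps_0\pmean\ge \alpha_0\eps_0\pmean_{|q=0}$ because $\pmean\ge\alpha_0\pmean_{|q=0}$, so both terms are $\le 2e^{-\alpha_0\eps_0\pmean_{|q=0}t}$, summing to $4e^{-\alpha_0\eps_0\pmean_{|q=0}t}$, which is \eqref{AsymptoticFirmeNew}.

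The main obstacle I anticipate is purely technical rather than conceptual: justifying that the conditional distributions $g^{(p,q)}_t$ genuinely satisfy the one-dimensional transport PDE for a.e.\ $(p,q)$ — i.e.\ disintegrating the weak formulation \eqref{FPSinRuidoDebil}, which is stated for $C^\infty(K)$ test functions, into equations indexed by $(p,q)$ — and checking the needed measurability/continuity in $(p,q)$ so that the integrals defining $m_t$, $U_t$ and the final $W_1$ bound make sense and the Gronwall argument is legitimate. This is exactly where the hypothesis \eqref{HypLipschitzDistribCond} enters: it guarantees $(p,q)\mapsto g^{(p,q)}_t$ stays Lipschitz (with a controlled constant) for all $t$, so $U_t$ is attained/finite and the disintegration is well-behaved; propagating this Lipschitz bound in time is the one place a short additional estimate (again via Proposition~\ref{EquGenInv}, comparing $X^{(p,q)}_t$ and $X^{(p',q')}_t$ through Gronwall) is required.
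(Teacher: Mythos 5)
Your proposal is correct and follows essentially the same route as the paper: disintegration into conditional distributions $f^1_{t|(p,q)}$, contraction to the conditional mean via the generalized inverse (giving the $2e^{-\eps_0\pmean t}$ term), a Gronwall/maximum-principle argument on $\sup_{(p,q)}|m(t,p,q)-m_0^0|$ yielding the rate $\alpha_0\eps_0\pmean_{|q=0}$, and assembly via the mixture property of $W_1$ together with $\pmean\ge\alpha_0\pmean_{|q=0}$. The technical obstacles you flag (justifying the disintegration and propagating Lipschitz continuity in $(p,q)$ through the flow so the supremum is attained and differentiable — the paper uses the Milgrom--Segal envelope theorem where you use a Dini derivative) are exactly the content of Steps 1 and 4 of the paper's proof and are handled there as you anticipate.
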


\begin{rem}\label{ExplConv}
Let us give an intuitive motivation for the convergence of the opinion of the
non-stubborn agents to $m_0^0$. Suppose we know that there exists
$m_\infty:=\lim_{t\to +\infty} m_t$. In view of equation \eqref{FPSinRuido}, it seems
reasonable to conjecture that $f_t^1$ converges to $\delta_{m_\infty}(w)\otimes
f_0^1(p,q)dpdq$ so that $f_t\to \alpha_0f_0^0 +
(1-\alpha_0)\delta_{m_\infty}(w)\otimes f_0^1(p,q)dpdq$. In particular we can pass to
the limit in the definition of $m_t$ to obtain that
$$ \pmean m_\infty =  \pmean \lim_{t\to +\infty} m_t
= \alpha_0 \int pw \, df_0^0 + (1-\alpha_0)m_\infty \int p\,df^1_0(p,q).
$$
Moreover,
$$ \pmean = \alpha_0 \int p\,df_0^0(p) + (1-\alpha_0)\int p \, df_0^1(p)
                = \alpha_0  \pmean_{|q=0} + (1-\alpha_0)\int p \, df_0^1(p).
 $$
Furthermore,
$$ \alpha_0 \pmean_{|q=0} m_\infty = \alpha_0 \int pw \, df_0^0, $$
 which implies that $m_\infty = m_0^0$.
\end{rem}

\begin{rem}
Sufficient conditions on $f_0^1$ ensuring the regularity assumption
\eqref{HypLipschitzDistribCond} can be easily found. Suppose for instance that $f_0^1$
has a density in the sense that $f_0^1 = f_0^1(w,p,q)dwdpdq$ with  $f_0^1 \in L^1(K)$.
Then, $f_{0|(p,q)}^1(w) = \frac{f_0^1(w,p,q)}{f_0^1(p,q)}$ if  $f_0^1(p,q)\neq 0$. Let
us assume that
\begin{enumerate}
\item  $0<C\le f_0^1(p,q)\le C'<\infty$ for any $(p,q)$ such that $f_0^1(p,q)\neq 0$,
\item   there exists $C''>0$ such that
$$ |f_0^1(w,p,q)-f_0^1(w,p',q')|\le C''(|p-p'|+|q-q'|), $$
for any $w$ and any $(p,q), (p',q')$ with $f_0^1(p,q),f_0^1(p',q')\neq 0$.
\end{enumerate}
In that case $f_{0|(p,q)}^1$ verifies
$$ |f_{0|(p,q)}^1(w) - f_{0|(p',q')}^1(w)  |\le C''(|p-p'|+|q-q'|), $$
for any $w$ and any $(p,q), (p',q')$ with $f_0^1(p,q),f_0^1(p',q')\neq 0$.
Consequently,  for any $\phi:[-1,1]\to \R$ 1-Lipschitz and any $(p,q), (p',q')$ such that
$f_0^1(p,q),f_0^1(p',q')\neq 0$, there holds
\begin{equation*}
\begin{split}
  \int_{-1}^1 \phi(w)\,(df_{0|(p,q)}^1(w) - df_{0|(p',q')}^1(w))
& = \int_{-1}^1 \phi(w)\,(f_{0|(p,q)}^1(w) - f_{0|(p',q')}^1(w))dw  \\
& \le C''(|p-p'|+|q-q'|) \int_{-1}^1 |\phi(w)|\,dw.
\end{split}
\end{equation*}
Without loss of generality, it can be assumed that $\phi(-1)=0$, since the above
inequalities are still valid when adding a constant to $\phi$. Accordingly,
$\|\phi\|_\infty\le 2$. Taking the supremum over such $\phi$ in the above expression
 gives  \eqref{HypLipschitzDistribCond}  with $L=4C''$.
\end{rem}

In the course of the proof we will use the following envelope Theorem due to Milgrom
and Segal in \cite{SM}:

\begin{thm}\label{EnvelopeTheorem}
Let $X$ be a set. Consider the function $V(t):=\max_{x\in X} h(x,t)$, $t\in [0,1]$.
Admit that $h$ is absolutely continuous with respect to $t$ for any $x$  and there exists $b\in
L^1([0,1])$ such that $|\p_th(x,t)|\le b(t)$ for any $x\in X$ and almost any $t\in
[0,1]$. Then $V$ is absolutely continuous.

Assume in addition that $h$ is differentiable in $t$ for any $x\in X$ and that for any
$t\in [0,1]$ the set $X(t):=\text{argmax }h(.,t)$ is non-empty. In this case, for any
selection of $x^*(t)\in X(t)$ we have
$$ V(t) = V(0) + \int_0^t \p_th(x^*(s),s)\,ds. $$
\end{thm}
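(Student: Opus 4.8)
The plan is to prove the two assertions in turn: first I would establish absolute continuity of $V$ from a uniform integral modulus of continuity, and then I would identify the almost-everywhere derivative of $V$ through the envelope comparison, which immediately yields the integral formula.

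For the first assertion, the key is the estimate
$$ |V(t)-V(s)| \le \int_s^t b(\tau)\,d\tau, \qquad 0\le s\le t\le 1. $$
To obtain the upper bound $V(t)-V(s)\le\int_s^t b$, I fix $s<t$ and choose $x\in X$ attaining $h(x,t)=V(t)$ (if the maximum were only a supremum, a near-maximizer gives the same conclusion after a limit). Since $V(s)\ge h(x,s)$ and $h(x,\cdot)$ is absolutely continuous,
$$ V(t)-V(s)\le h(x,t)-h(x,s)=\int_s^t \p_\tau h(x,\tau)\,d\tau\le\int_s^t b(\tau)\,d\tau. $$
The reverse bound $V(s)-V(t)\le\int_s^t b$ follows symmetrically, this time picking a maximizer $y$ at time $s$ and using $V(t)\ge h(y,t)$. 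Because $b\in L^1([0,1])$, the indefinite integral $\tau\mapsto\int_0^\tau b$ is absolutely continuous, so for every $\eps>0$ there is $\delta>0$ with $\sum_j\int_{s_j}^{t_j}b<\eps$ whenever the disjoint intervals $(s_j,t_j)$ satisfy $\sum_j(t_j-s_j)<\delta$. The displayed estimate transfers this property verbatim to $V$, proving that $V$ is absolutely continuous.

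For the second assertion, absolute continuity guarantees that $V$ is differentiable at almost every $t$ and that $V(t)=V(0)+\int_0^t V'(\tau)\,d\tau$. It therefore suffices to show that, for almost every $t$ and for the prescribed selection $x^*(t)\in X(t)$, one has $V'(t)=\p_t h(x^*(t),t)$. I fix a point $t$ at which $V$ is differentiable and abbreviate $x^*=x^*(t)$, so that $h(x^*,t)=V(t)$ while $V(s)\ge h(x^*,s)$ for every $s$. For $s>t$, dividing $V(s)-V(t)\ge h(x^*,s)-h(x^*,t)$ by $s-t>0$ and letting $s\to t^+$ yields $V'(t)\ge\p_t h(x^*,t)$, using that $h(x^*,\cdot)$ is differentiable at $t$. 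For $s<t$ the same inequality divided by $s-t<0$ reverses, and letting $s\to t^-$ gives $V'(t)\le\p_t h(x^*,t)$. Combining the two one-sided bounds gives $V'(t)=\p_t h(x^*(t),t)$ at almost every $t$. Substituting this identity into the fundamental-theorem representation of $V$ produces the claimed formula; note that no measurability of the selection $x^*(\cdot)$ is required, since the integrand agrees almost everywhere with the integrable function $V'$.

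The step I expect to be the main obstacle is the almost-everywhere identification $V'(t)=\p_t h(x^*(t),t)$, which is the genuine envelope content of the theorem. It hinges on the comparison $V(s)\ge h(x^*(t),s)$ holding for all $s$ with equality exactly at $s=t$, so that $s\mapsto V(s)-h(x^*(t),s)$ has a minimum there; the delicate point is that the one-sided difference-quotient arguments must be run on both sides and reconciled only at the points where $V$ is differentiable. The envelope equality may fail on the exceptional null set where $V'$ does not exist, but this is harmless, since it does not affect the value of $\int_0^t V'$.
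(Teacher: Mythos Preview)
The paper does not supply its own proof of this envelope theorem; it merely states the result and attributes it to Milgrom and Segal \cite{SM}, invoking it as a tool inside the proof of Theorem~\ref{TeoAsymptoticGeneral}. So there is nothing in the paper to compare against.

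Your argument is the standard one and is correct. The first step---bounding $|V(t)-V(s)|$ by $\int_s^t b$ via a (near-)maximizer at one endpoint and the sub-optimality inequality at the other---yields absolute continuity directly from the absolute continuity of the indefinite integral of $b$. For the second step, freezing $x^*=x^*(t)$ and using that $s\mapsto V(s)-h(x^*,s)$ attains a minimum at $s=t$ gives the two one-sided inequalities for the difference quotient, which pin down $V'(t)=\p_t h(x^*(t),t)$ at every point where $V'(t)$ exists; the fundamental theorem for absolutely continuous functions then gives the integral formula. Your remark that measurability of the selection is unnecessary because the integrand coincides a.e.\ with the integrable function $V'$ is exactly the right way to close the argument. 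This is essentially the proof in Milgrom--Segal.
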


\subsection{The proof of Theorem \ref{TeoAsymptoticGeneral}}

We have all of the ingredients to show the asymptotic behaviour in the general case. For convenience, we  divide the proof in several
steps.

\begin{proof}[Proof of  Theorem \ref{TeoAsymptoticGeneral}]

 For any $t$ and any $p,q\in [0,1]\times
[\eps_0,1]$  denote by $f^1_{t|(p,q)}\in P([-1,1])$ the conditional distribution of
opinion among the agents with parameter $(p,q)$.

\begin{Step}
For any $(p,q)\in \text{supp }(f_0(p,q)dpdq)$, $f^1_{t|(p,q)}$ is the unique solution
to
\begin{equation}\label{EquOpinonCondq}
\left\{\begin{array}{rl}
 & \p_t f^1_{t|(p,q)} + \p_w((m_t-w)q \pmean f^1_{t|(p,q)}) = 0,\\
& f^1_{t=0|(p,q)} = f_{0|(p,q) },
\end{array}\right.
\end{equation}
in $C([0,+\infty),P([-1,1]))$.

Moreover, the function $(p,q)\to f^1_{t|(p,q)}$ is Lipschitz with respect to the
Wasserstein distance $W_1$. Namely, for any $(p,q),(p',q')\in [0,1]\times [\eps_0,1]$,
\begin{equation}\label{RegularidadCondDistrib2}
 W_1(f_{t|(p,q)}^1,f_{t|(p',q')}^1)\le  C_t(|q-q'|+|p-p'|).
\end{equation}

Furthermore, it fulfils
\begin{equation}\label{need} \int_K \phi \,df_t^1 = \int_0^1\int_0^1 \Big( \int_{-1}^1 \phi \,df_{t|(p,q)}^1(w)\Big) \,df_0^1(p,q), \qquad
\forall\,\phi\in C(K). \end{equation}

\end{Step}

\begin{proof}
 The existence of a unique solution to  (\ref{EquOpinonCondq}) is ensured by the results of Ca\~nizo,
 Carrillo y Rosado \cite{CCR}, see Remark \ref{RemCCR}.

Denote by $\phi_t$ the flow of the vector-field $(w,p,q)\to (q\pmean (m_t-w),0,0)$.
Since $m_t$ is considered to be  a known $C^1$ function, this flow can be rewritten as
$\phi_t(w,p,q)=(\phi^1_t(w,p,q),p,q)$ and $f_t^1=\phi_t\sharp f_0^1$  being the
push-forward measure defined as
$$
\int_K \psi(w,p,q)\,df_t^1(w,p,q)  = \int_K \psi(\phi_t(w,p,q))\,df_0^1(w,p,q),
$$
for all $\psi\in C(K)$.

This implies that for any $\psi\in C([-1,1])$ and $\phi\in C ([0,1]\times [0,1])$,
$$ \int_K \psi(w)\phi(p,q) \,df_t^1(w,p,q)
= \int_K \psi(\phi_t^1(w,p,q))\phi(p,q) \,df_0^1(w,p,q), $$ i.e.
\begin{eqnarray*}
  && \int_0^1\int_0^1 \phi(p,q)\Big( \int_{-1}^1 \psi(w) \, df_{t|(p,q)}^1(w)\Big) \,df_0^1(p,q)  \\
  && = \int_0^1\int_0^1 \phi(p,q) \Big( \int_{-1}^1 \psi(\phi_t^1(w,p,q))\,df_{0|(p,q)}^1(w)\Big)  \,df_0^1(p,q).
\end{eqnarray*}
 The arbitrariness of $\phi\in C([0,1]\times[0,1])$ yields, for any $t\ge 0$ and any  continuous function $\psi$, that
\begin{equation}\label{AsymptoticFirme30}
 \int_{-1}^1 \psi(w) \, df_{t|(p,q)}^1(w) =  \int_{-1}^1 \psi(\phi_t^1(w,p,q))\,df_{0|(p,q)}^1(w),
\end{equation}
for  almost any $(p,q)$, except for a $f_0^1(p,q)dpdq$-null set.

In particular, for any $k\in \mathbb{N}$, there exists a  $f_0^1(p,q)dpdq$-null set,
denoted as $A_{t,k}\subset [0,1]\times [0,1]$, for which  $\psi(w)=w^k$ verifies the
previous inequality  at any $(p,q)\in A_{t,k}^c$.

Note that $A_t:=\cup_{k\ge 0} A_{t,k}$ is a  $f_0^1(p,q)dpdq$-null set such that
\eqref{AsymptoticFirme30} holds for any polynomial $\psi$ and any $(p,q)\in A_t^c$.
The density of the polynomials in $C([-1,1])$  implies that indeed,
\eqref{AsymptoticFirme30} holds for any $\psi\in C([-1,1])$ and any $(p,q)\in A_t^c$
with $A_t$ of $f_0^1(p,q)dpdq$-null measure. This equality can then also be expressed as
\begin{equation}\label{RegularidadCondDistrib}
 f_{t|(p,q)}^1 = \phi_t^1(.,p,q)\sharp f_{0|(p,q)}^1,
\end{equation}
for any $(p,q)\in A_t^c$.

We would like this identity is fulfilled  for any  $(p,q)\in
\text{supp}(f_0^1(p,q)dpdq)$. So, let us fix certain $t\ge 0$ and  assume for the moment that there exists a constant $C_t>0$
depending only on $t$ such that for any $(p,q),(p',q')\in [0,1]\times [\eps_0,1]$,
\begin{equation}\label{RegularidadCondDistrib3}
 W_1( \phi_t^1(.,p,q)\sharp f_{0|(p,q)}^1, \phi_t^1(.,p',q')\sharp f_{0|(p',q')}^1)
 \le C_t(|q-q'|+|p-p'|).
\end{equation}
Note that this claim also shows the Lipschitz continuity stated in
(\ref{RegularidadCondDistrib2}).

Using the  decomposition $f_0^1(p,q)dpdq = f^{1,\;non-atom}_0+f^{1,\;atom}_0$, observe
that \eqref{RegularidadCondDistrib} is  also satisfied for any $(p,q)$ belonging to a
larger set, $ A_t^c\cup \{f_0^{1,\;atom}>0\}$, since $f^{1,\;atom}_0$ gives positive
mass to each of its atoms. This observation and the continuity given in
(\ref{RegularidadCondDistrib2}) conclude that  \eqref{RegularidadCondDistrib} is
satisfied in $A_t^c\cup \mbox{supp}(f^{1,\;atom})$.

It remains to modify the definition of $f_{t|(p,q)}^1$ at the variables $(p,q)\in A_t
\cap \text{supp}(f^{1,\;non-atom}_0)$ in such a way that  $f_{t|(p,q)}^1$ preserves
its continuity in $t$  and \eqref{RegularidadCondDistrib} holds for any $(p,q)\in
\text{supp}(f_0^1(p,q)dpdq)$.

Take first some $(p,q)\in A_t\cap \{f^{1,\;non-atom}_0>0\}$. Since $A_t$ has null
measure,  there exists a sequence $(p_k,q_k)\in A_t^c\cap \{f^{1,\;non-atom}_0>0\}$
such that $(p_k,q_k)\to (p,q)$. As a consequence of \eqref{RegularidadCondDistrib2},
$(f_{t|(p_k,q_k)})_k$ is a Cauchy sequence in the complete space $(P([-1,1],W_1)$,
hence it converges to some limit $g_{(p,q)}\in P([-1,1])$.
 Furthermore, \eqref{RegularidadCondDistrib2} ensures also that this limit does not depend on the approximating sequence $(p_k,q_k)_k$.

 We then declare $f_{t|(p,q)}^1:=g_{(p,q)}$ on $A_t\cap \{f^{1,\;non-atom}_0>0\}$.
That way $f^1_{t|(p,q)}$ is continuous and \eqref{RegularidadCondDistrib} holds for
any $(p,q)\in \{f^{1,\; non-atom}>0\}$. Proceed with $(p,q)\in \text{supp }(f^{1,\;
non-atom})$ similarly, taking an approximating sequence $(p_k,q_k)\in \{f^{1,\;
non-atom}>0\}$.

In conclusion, redefining $f_{t|(p,q)}^1$ on $f_0^1(p,q)dpdq$-null sets in such a way
that $f^1_{t|(p,q)}$ and the right hand side of \eqref{RegularidadCondDistrib} are
continuous  with respect to $(p,q)$, guarantees   that \eqref{RegularidadCondDistrib} and
\eqref{AsymptoticFirme30} are satisfied for any $t\ge 0$ and any $(p,q)\in \text{supp
}(f_0^1(p,q)dpdq)$. Moreover, it is clear that this modification is in accordance with
the application of Fubini's Theorem, thus  (\ref{need}) holds.

To conclude the proof, it remains to prove the claim \eqref{RegularidadCondDistrib3}.
Let $\psi:[-1,1]\to \R$ be 1-Lipschitz. Note that
\begin{equation} \label{AsymptoticFirme10}
\begin{split}
& \int \psi \,d(\phi_t^1(.,p,q)\sharp f_{0|(p,q)}^1 - \phi_t^1(.,p',q')\sharp f_{0|(p',q')}^1)  \\
 &= \int  \psi(\phi_t^1(w,p,q))\, df_{0|(p,q)}^1(w) - \int \psi(\phi_t^1(w,p',q'))\, df_{0|(p',q')}^1(w) \\
& = \int  \psi(\phi_t^1(w,p,q))-\psi(\phi_t^1(w,p',q'))\, df_{0|(p,q)}^1(w)  \\
 & \hspace{0.3cm}  + \int  \psi(\phi_t^1(w,p',q'))\,d(f_{0|(p',q')}^1 - f_{0|(p,q)}^1)(w)=I+II.
\end{split}
\end{equation}
The second term can be estimated using the definition of the Wasserstein distance
$W_1$,
\begin{eqnarray*}
&II& \leq Lip(\psi(\phi_t^1(.,p',q'))) W_1(f_{0|(p',q')}^1 , f_{0|(p,q)}^1)  \\
 &&   \le Lip(\phi_t^1(.,p',q')) L (|q-q'|+|p-p'|),
\end{eqnarray*}
where $L$ is the Lipschitz constant given by the assumption
\eqref{HypLipschitzDistribCond}. On the other hand,  the first term in
\eqref{AsymptoticFirme10} can be bounded  as
$$ I\leq \max_{|w|\le 1}|\psi(\phi_t^1(.,p,q))-\psi(\phi_t^1(.,p',q'))|
 \le  \max_{|w|\le 1}|\phi_t^1(.,p,q)-\phi_t^1(.,p',q')|. $$
Summarizing,
\begin{eqnarray*}
&&  \int \psi \,d(\phi_t^1(.,p,q)\sharp f_{0|(p,q)}^1 - \phi_t^1(.,p',q')\sharp f_{0|(p',q')}^1)  \\
&& \qquad \le  \max_{|w|\le 1}|\phi_t^1(w,p,q)-\phi_t^1(w,p',q')|
    + Lip(\phi_t^1(.,p',q'))  L (|q-q'|+|p-p'|).
\end{eqnarray*}
At this stage, recall that
$$ \phi_t^1(w,p,q) = w + q\pmean \int_0^t (m_s-\phi_s^1(w,p,q))\,ds,  $$
therefore,
\begin{eqnarray*}
&& \phi^1_t(w,p,q)-\phi^1_t(w,p',q')   = (q-q')<p>\int_0^t( m_s - \phi^1_s(w,p',q'))\,ds \\
&& \qquad + q\pmean \int_0^t (\phi_s(w,p',q')- \phi_s(w,p,q))\,ds.
\end{eqnarray*}
Taking now into account that $|\phi_t^1(w,p,q)|\le 1$ for any $(w,p,q)$, we infer that

\begin{eqnarray*}
 |\phi_t(w,p,q)-\phi_t(w,p',q')|
\le  2t |q-q'| + \int_0^t |\phi_s(w,p',q')- \phi_s(w,p,q)|\,ds.
\end{eqnarray*}
With the use of Gronwall's lemma this yields
$$ |\phi_t^1(w,p,q)-\phi_t^1(w,p',q')|\le 2te^t|q-\hat  q|. $$
Similar arguments prove that for $w,\tilde w\in [-1,1$ and $p,q\in [0,1]$,
$$ |\phi_t^1(w,p,q) - \phi_t^1(\tilde w,p,q)| \le e^t|w-\tilde w|, $$
hence $ Lip(\phi_t^1(.,p,q))\le e^t$. In conclusion, we have shown that
$$  \int \psi \,d(\phi_t^1(.,p,q)\sharp f_{0|(p,q)}^1 - \phi_t^1(.,p',q')\sharp f_{0|(p',q')}^1)
\le C(t)(|q-q'|+|p-p'|).  $$ The desired claim \eqref{RegularidadCondDistrib3} follows
now taking the supremum among all functions $\psi$ 1-Lipschitz.
\end{proof}

\begin{rem}

 It would be natural to conjecture  that the density $f_{t|(p,q)}^1$, modified in $f_0^1(p,q)dpdq$-null set as in Theorem~\ref{EnvelopeTheorem}, still defines a conditional density. Indeed, it is straightforward to see that
 $$\mu_w(p,q):=f_{t|(p,q)}^1(w)\in P([0,1]\times[0,1]) \mbox{ for any } w\in [-1,1],$$
 and $P(X\in A;Y\in B) =  \int_A\int_B d\mu_w(p,q)\,df_{t|(p,q)}^1(w)$ for any $A\subset [-1,1]$, $B\subset [0,1]\times[0,1]$ Borel sets.
 However, the fact that $f_t^1(.,B)$ is measurable for any $B\subset [0,1]\times[0,1]$ Borel, is not so immediate, and nevertheless is out of the scope of our results. In particular, for our proof it suffices with $(\ref{need})$.

\end{rem}

We denote by
\begin{equation}\label{eme}m(t,p,q)=\int_{-1}^1 w \, df_{t|(p,q)}^1(w),
\end{equation}
the mean opinion among the agents with parameter $p,q\in [0,1]\times [\eps_0,1]$.

\begin{Step} There holds
\begin{equation}\label{AsymptoticFirmeConvMedia}
 W_1(f_{t|(p,q)}^1,\delta_{m(t,p,q)})\le 2e^{-\eps_0 \pmean t}.
\end{equation}
\end{Step}

\begin{proof}
It can be deduced analogously to \eqref{FirmesMean}.
\end{proof}

In view of the previous step, it is natural to study the asymptotic behavior of the
function $m(t,.)$ as $t\to +\infty$.
\begin{Step} For any $t\ge 0$ and any $(p,q)\in [0,1]\times [\eps_0,1]$ the function
$m(t,p,q)$ defined in (\ref{eme})
 satisfies
\begin{equation} \label{SistemaContinuo2}
\begin{split}
 \p_tm(t,p,q)
& =  q\alpha_0 \pmean_{|q=0} \Big[ m_0^0 -  m(t,p,q)\Big]  \\
&   \hspace {0.4cm} + (1-\alpha_0)q \int_K p'\Big[ m(t,p',q')-m(t,p,q)\Big]\,
df_0^1(w,p',q').
\end{split}
\end{equation}

\end{Step}
\begin{proof}
Given that $f_{t|(p,q)}^1$ fulfills  \eqref{EquOpinonCondq} for any $(p,q)\in
[0,1]\times [\eps_0,1]$, in particular
\begin{eqnarray}\label{nombre}
 \p_tm(t,p,q)
& =& \frac{d}{dt} \int_{-1}^1 w \, df_{t|(p,q)}^1(w) = q\pmean \int_{-1}^1 (m_t-w)\,
df_{t|(p,q)}^1(w) \\\nonumber
 & = & q\pmean (m_t-m(t,p,q)).
\end{eqnarray}
Moreover according to the definition of $m_t$,
\begin{eqnarray*}
 \pmean m_t
& = & \int_K pw\,df_t(w,p,q)  \\
& = & \alpha_0 \int_{-1}^1\int_0^1 pw \,df_0^0(w,p) + (1-\alpha_0) \int_K pw\,
df_t^1(w,p,q),
\end{eqnarray*}
being
\begin{eqnarray*}
 \int_K pw\, df_t^1(w,p,q)
& = & \int_{0}^1 \int_{0}^1 p \Big( \int_{-1}^1  w \, df_{t|(p,q)}^1(w) \Big) \, df_0^1(p,q)  \\
&  = & \int_{0}^1 \int_{0}^1 p m(t,p,q) \,df_0^1(p,q).
\end{eqnarray*}
Denote as $\pmean_{|q=0}:=\int p\,df_0^0(p,w) $, that is, the mean value of $p$ among
the agents with $q=0$. We have
\begin{eqnarray*}
\pmean & = & \int_K p\,df_t(w,p,q)
 = \alpha_0 \pmean_{|q=0} +     (1-\alpha_0) \int_K p \, df_t^1(w,p,q).
\end{eqnarray*}
In terms of $\pmean$ and $\pmean m_t$ equation (\ref{nombre}) is equivalent to
\begin{eqnarray*}
 \frac{1}{q} \p_tm(t,p,q)
& = & \alpha_0 \Big[ \int_{-1}^1\int_0^1 pw \,df_0^0(w,p) - \pmean_{|q=0} m(t,p,q)\Big]  \\
& &     + (1-\alpha_0) \int_K p'\Big[ m(t,p',q')-m(t,p,q)\Big]\, df_0^1(w,p',q'),
\end{eqnarray*} which in view of  the definition  of $m_0^0$ in \eqref{Defm00},
can be rewritten as (\ref{SistemaContinuo2}).
\end{proof}

 At this stage, our aim is to determine the behavior as $t\to +\infty$ of the solution $m(t,p,q)$ to the linear system (\ref{SistemaContinuo2}), which is exactly the system appearing in \eqref{SistLinealSimplificado},  when
$f^1_0$ had the special form $f_0^1 = \sum_{i=1}^N \alpha_i g_0^i(w)dw\otimes
\delta_{p=p_i,q=q_i}$.

\medskip

The following step proves that $m(t,p,q)$ is Lipschitz continuous in $(p,q)$ uniformly
in $t$.

\begin{Step}
For any $\eps_0<\eps<2/(L\pmean)$ (where $L$ is given in
\eqref{HypLipschitzDistribCond}), and for any $(p,q),(p',q')\in [0,1]\times
[\eps_0,1]$,  there holds
\begin{equation} \label{m_Lipschitz}
|m(t,p,q)-m(t, p',q')|\le \frac{2}{\eps\pmean}(|q-q'|+|p-p'|).
\end{equation}
\end{Step}

\begin{proof}
Using \eqref{SistemaContinuo2}   we have
\begin{eqnarray*}
&& \p_t[m(t,p,q)-m(t, p',q')]   \\
&& = \alpha_0 \pmean_{|q=0}  m_0^0(q-q') - \alpha_0 \pmean_{|q=0}  (q-q')m(t,p,q) \\
&& \hspace{0.3cm} -\alpha_0\pmean_{|q=0}  q' [m(t,p,q)-m(t,p',q')]  \\
&& \hspace{0.3cm} + (1-\alpha_0) (q- q')
  \int_0^1\int_0^1 \tilde p [m(t,\tilde p, \tilde q)-m(t,p,q)]\,df_0^1(\tilde p,\tilde q)   \\
&& \hspace{0.3cm}  - (1-\alpha_0) q' [m(t,p,q)-m(t,p',q')] \int_0^1\int_0^1 p\,df_0^1( p, q)  \\
&& = (q-q')
 \Big\{ \alpha_0 \pmean_{|q=0}[m_0^0-m(t,p,q)]  \\
 &&  \hspace{1.9cm}
  + (1-\alpha_0) \int_0^1\int_0^1 \tilde p [m(t,\tilde p,\tilde q)-m(t,p,q)]\,df_0^1(\tilde p,\tilde q)  \Big\} \\
 && \hspace{0.3cm}  - [m(t,p,q)-m(t, p',q')] q' \pmean.
\end{eqnarray*}
Consequently,
\begin{eqnarray*}
&& \frac12  \frac{\p}{\p t} |m(t,p,q)-m(t,p',q')|^2   \\
&& = (q-q')\Big[m(t,p,q)-m(t,p',q') \Big]
  \Big\{ \alpha_0 \pmean_{|q=0}[m_0^0-m(t,p,q)]  \\
 &&  \hspace{3cm}
 +(1-\alpha_0) \int \tilde p [m(t,\tilde p,\tilde q)-m(t,p,q)]\,df_0^1(\tilde p, \tilde q)  \Big\} \\
 && \hspace{0.3cm}  - |m(t,p,q)-m(t,p',q')|^2 q' \pmean.
\end{eqnarray*}
Recalling that $|m_0^0|,|m(t,p,q)|\le 1$, it is straightforward to see that
$$
\Big| \alpha_0 \pmean_{|q=0}[m_0^0-m(t,p,q)]
  + (1-\alpha_0) \int \tilde p [m(t,\tilde p,\tilde q)-m(t,p,q)]\,df_0^1(\tilde p,\tilde q)  \Big|\leq 2.
$$
The fact that $q'\ge\eps_0$ allows to deduce that
\begin{eqnarray*}
&& \frac12  \frac{\p}{\p t} |m(t,p,q)-m(t,p',q')|^2   \\
&& \le 2|q-q'|\Big|m(t,p,q)-m(t,p',q') \Big| - \eps_0 \pmean |m(t,p,q)-m(t,p',q')|^2 \\
&& \le 2(|q-q'|+|p-p'|)\Big|m(t,p,q)-m(t,p',q') \Big| \\
&& \hspace{0.3cm}    - \eps_0 \pmean |m(t,p,q)-m(t,p',q')|^2.
\end{eqnarray*}
Let $u(t)\ge 0$ be the solution to
$$\left\{\begin{array}{ll}
   u'(t) &= 4(|q-q'|+|p-p'|) \sqrt{u(t)} - 2\eps_0 \pmean u(t), \\[0.02cm]
    u(0)&= |m(0,p,q)-m(0,p',q')|^2.
\end{array}\right.$$
Note that $|m(t,p,q)-m(t,p',q')|^2 \le u(t)$. Moreover, writing the equation for $u$
as
$$ u'(t) = 2 \eps_0 \pmean \sqrt{u(t)} \Big( u^* -  \sqrt{u(t)} \Big),$$
where
$$
u^*:= \frac{2(|q-q'|+|p-p'|)|}{\eps_0 \pmean}, $$ we see that  if $u(0)\le (u^*)^2$,
then $u(t)\le (u^*)^2$ for any $t$ and $u(t)\to (u^*)^2$. Observe that in view of the
assumption \eqref{HypLipschitzDistribCond},
\begin{eqnarray*}
 u(0) & = & |m(0,p,q)-m(0,p',q')|^2
 = \Big| \int_{-1}^1 w\,df^1_{0|(p,q)}(w) - \int_{-1}^1 w\,df^1_{0|(p',q')}(w) \Big|^2 \\
& \le & \Big( W_1(f^1_{0|(p,q)} , f^1_{0|(p',q')}  )\Big)^2
 \le  L^2(|q-q'|+|p-p'|)^2.
\end{eqnarray*}
Taking $\eps>0$ such that $\eps_0<\eps<2/(L\pmean)$, ensures that $u(0)\le (u^*)^2$
and thus  $u(t)\le (u^*)^2$ for any $t$. It follows then $ |m(t,p,q)-m(t,p',q')|^2 \le
u(t)\le (u^*)^2 $, which proves (\ref{m_Lipschitz}).

\end{proof}

We have all of the ingredients to show the convergence of  $m(t,p,q)$ to $m_0^0$:

\begin{Step}
For any $(p,q)\in \text{supp}\,(f_0^1(p,q)dpdq)$ and any $t\ge 0$ it holds that
$$  |m(t,p,q)-m_0^0|
\le \Big(\max_{(p,q)\in \text{supp}(f_0^1)} |m(0,p,q)-m_0^0| \Big)
 e^{-\eps_0\alpha_0 \pmean_{|t=0} t}.  $$
\end{Step}

\begin{proof}
Relation \eqref{SistemaContinuo2} implies  that for any $q\in [\eps_0,1]$ and $t\geq
0$
\begin{equation} \label{AsymptoticFirme20}
\begin{split}
& \frac12  \frac{\p}{\p t} |m(t,p,q)-m_0^0|^2  \\
&  = \p_t m(t,p,q) [m(t,p,q)-m_0^0]   \\
&  = -q\alpha_0 \pmean_{|q=0} [m_0^0-m(t,p,q)]^2  \\
 & \hspace{0.4cm} + q(1-\alpha_0)[m(t,p,q)-m_0^0]
   \int_0^1\int_0^1 \tilde p\Big[m(t,\tilde p,\tilde q)-m(t,p,q)\Big]\,df_0^1(\tilde p,\tilde q).
\end{split}
\end{equation}
  In particular, choosing $(p,q)=(p^*,q^*)$ a maximum point for $|m(t,.)-m_0^0|$ (its
existence is ensured since $\text{supp}(f_0^1(p,q)dpdq)$ is compact and $m(t,.)$ is continuous). Then,
\begin{eqnarray*}
&& \frac12  \frac{\p}{\p t} |m(t,.)-m_0^0|^2_{|(p^*,q^*)}  \\
& & =  -q^*\alpha_0 \pmean_{|q=0}  [m_0^0-m(t,p^*,q^*)]^2  \\
&& \hspace{0.4cm} + q^*(1-\alpha_0)[m(t,p^*,q^*)-m_0^0]
        \int_0^1\int_0^1 \tilde p\Big[ (m(t,\tilde p,\tilde q)-m_0^0) + (m_0^0-m(t,p^*,q^*))\Big]\,df_0^1(\tilde p,\tilde q)  \\
&  & = -q^*\alpha_0 \pmean_{|q=0} [m_0^0-m(t,p^*,q^*)]^2  \\
&& \hspace{0.4cm} + q^*(1-\alpha_0)[m(t,p^*,q^*)-m_0^0]
        \int_0^1\int_0^1 \tilde p\Big[ m(t,\tilde p,\tilde q)-m_0^0 \Big]\,df_0^1(\tilde p,\tilde q)   \\
&& \hspace{0.4cm} - q^*(1-\alpha_0)[m(t,p^*,q^*)-m_0^0]^2
 \int_0^1\int_0^1 \tilde p\,df_0^1(\tilde p,\tilde q)\\
 && =I+II+III.
\end{eqnarray*}
The choice of $q^*$ assures that
$$ II\leq q^*(1-\alpha_0)|m(t,p^*,q^*)-m_0^0|^2  \int_0^1\int_0^1 \tilde p \,df_0^1(\tilde p,\tilde q)=-III.  $$
The cancellation of these two terms gives
\begin{eqnarray} \label{DerMax100}
 \frac{\p}{\p t} |m(t,.)-m_0^0|^2_{|(p^*,q^*)}
\le -2\eps_0\alpha_0 \pmean_{|q=0} |m_0^0-m(t,p^*,q^*)|^2.
\end{eqnarray}

Denote $V(t)=\max_{(p,q)\in \text{supp}(f_0^1)} h(t;(p,q)) $ with
$h(t;(p,q))=|m(t,p,q)-m_0^0|^2$, which in $t$ is a $C^1$ function since $m$ is $C^1$
in $t$. Moreover, by \eqref{AsymptoticFirme20} it holds that $|\p_th(t;(p,q))|\le C$.
Now the envelope Theorem~\ref{EnvelopeTheorem} applies  to obtain that
 $V$ is absolutely continuous with derivative
$$ V'(t) = \p_t \Big(|m(t,q^*)-m_0^0|^2\Big)  \qquad a.e.. $$
Thus, in view of \eqref{DerMax100},
$$ V'(t)\le -2 \eps_0\alpha_0 \pmean_{|q=0} V(t) $$
and as a result
$$ V(t) \le V(0)e^{-2\eps_0\alpha_0 \pmean_{|q=0}  t},$$
which completes the proof.
\end{proof}

We are now in position to accomplish the proof of Theorem \ref{TeoAsymptoticGeneral}.
The previous Step ensures that for any $t\ge 0$ and any $(p,q)\in \text{supp
}f_0^1(p,q)dpdq$,
$$ W_1\Big(\delta_{m(t,p,q)},\delta_{m_0^0}\Big)
   = |m(t,p,q)-m_0^0| \le 2e^{-\eps_0\alpha_0 \pmean_{|q=0} t}. $$
According to \eqref{AsymptoticFirmeConvMedia} and noticing that $\pmean \ge
\alpha_0\pmean_{|q=0}$, we infer that
$$ W_1\Big(f^1_{t|(p,q)},\delta_{m_0^0}\Big)
     \le 2e^{-\eps_0\alpha_0 \pmean_{|q=0} t} + 2e^{-\eps_0 \pmean t}
        \le  4e^{-\eps_0\alpha_0 \pmean_{|q=0} t}.  $$

We  now claim that
$$ W_1\Big(f^1_t,\delta_{m_0^0}\otimes f_0^1(p,q)dpdq\Big)
 \le  4e^{-\eps_0\alpha_0 \pmean_{|q=0} t}. $$
Indeed let $\psi:K\to \R$ be 1-Lipschitz. Then,
\begin{eqnarray*}
 &&\int_K \psi\,\Big(df_t^1-\delta_{m_0^0}\otimes f_0^1(p,q)dpdq\Big)  \\
 && = \int_0^1 \int_0^1
 \Big( \int_{-1}^1 \psi(w,p,q)\, (df_{t|(p,q)} - \delta_{m_0^0}) \Big) \, df_0(p,q).
\end{eqnarray*}
Since $\psi(.,p,q)$ es 1-Lipschitz, the inner integral is bounded above by $W_1(
f_{t|(p,q)}, \delta_{m_0^0})$, which implies  that
\begin{eqnarray*}
 \int_K \psi\,\Big(df_t^1-\delta_{m_0^0}\otimes f_0^1(p,q)dpdq\Big)
& \le & 4e^{-\eps_0\alpha_0 \pmean_{|q=0} t} \int_0^1 \int_0^1  df_0(p,q)  \\
& = & 4e^{-\eps_0\alpha_0 \pmean_{|q=0} t}.
\end{eqnarray*}
The claim follows taking supremum over all functions $\psi$ 1-Lipschitz. The proof of
the theorem is now complete.
\end{proof}

\section{Computational experiments}

We close the paper with some agent based simulations.

The numerical experiment  considers a population of $N=10000$ agents with
$\alpha_0=60\%$  stubborn agents. We take such a high proportion to speed up the
computations in view of \eqref{AsymptoticFirmeNew} whereas it does not change the
value of $m_\infty$.

Initially,
\begin{itemize}
\item each non-stubborn agent has opinion chosen uniformly at random in $[0.3;1]$,
parameter $q$ is chosen uniformly at random in $[0.2;1]$ and we set $p=1-q$,

 \item one third of
the stubborn agents has $p=0.6$ and opinion chosen at random in $[-0.8;-0.6]$
uniformly,  whereas the others  have $p=0.2$ and opinion chosen uniformly at random in
$[0.4;0.8]$.
\end{itemize}
Notice in particular  that
\begin{eqnarray*}
 \pmean_{q=0} & = & \frac13\times 0.6 + \frac23\times 0.2 = \frac13,  \\
 \pmean & = & \alpha_0 \pmean_{q=0} + (1-\alpha_0)\int p\,df_0^1
           = 0.6*\frac13 + 0.4*(1-0.6) \\
           & = & 0.36, \\
\int pw \,df^0_0(w,p) & = &
  \frac13 \times 0.6 \times (-0.7) + \frac23 \times 0.2 \times 0.6.
\end{eqnarray*}
It follows that
$$ m_\infty = \int \frac{pw}{\pmean_{q=0}} \,df^0_0(w,p) \approx -0.18. $$

We then  let the agents interact following the rules with $\gamma = 0.01$. We depict in
the figures in Table \ref{DensityEvol} the density of $(w,q)$ parameters among the
non-stubborn population for different times in gray scale (the whiter is the graphic,
the higher is the density). This picture clearly reveals the convergence of the density of
opinion $f_{t|q}(w)dw$ among the population with $q$ towards its mean value $m_q(t)$
(at a faster pace for higher $q$ as predicted), and then the displacement of the
curve-like density to a vertical segment located at $w \approx -0.18$. This is in
complete agreement with the theoretical value $m_\infty$ given above.

\begin{table}[!ht]\label{DensityEvol}
\begin{center}
\begin{tabular}{cc}
$$ \includegraphics[width=0.5\linewidth, height=0.5\linewidth]{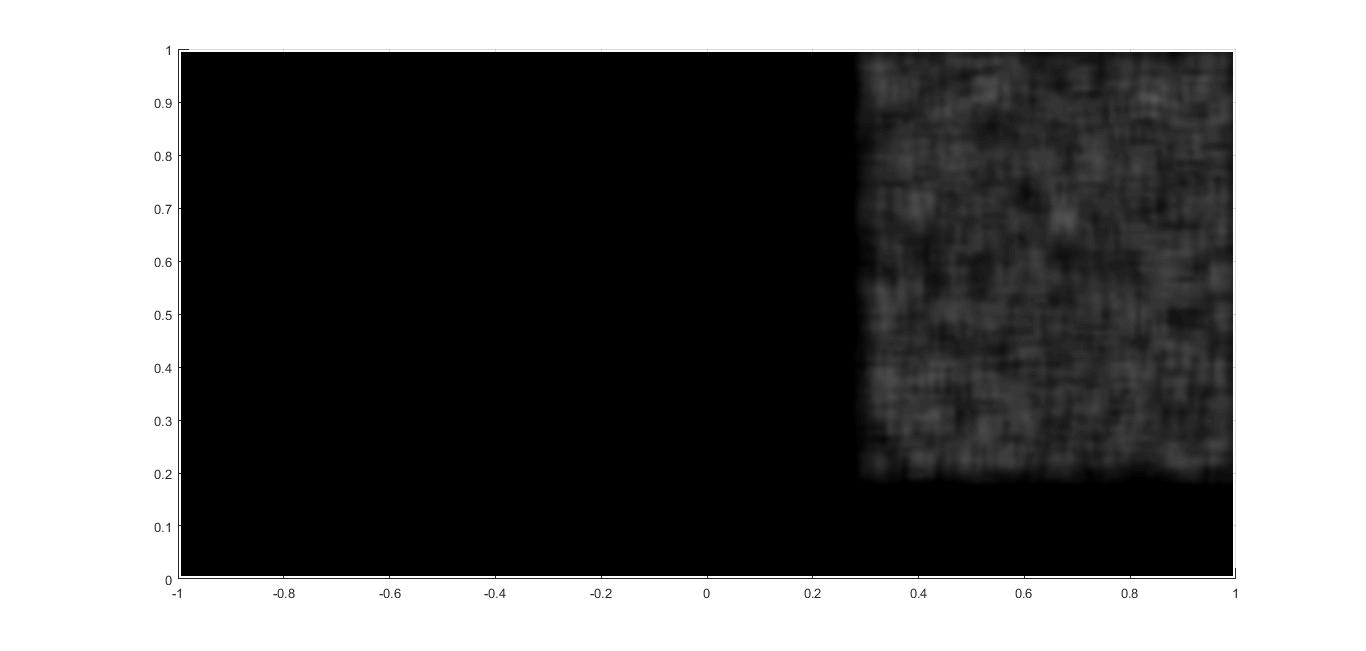} $$
&
$$ \includegraphics[width=0.5\linewidth, height=0.5\linewidth]{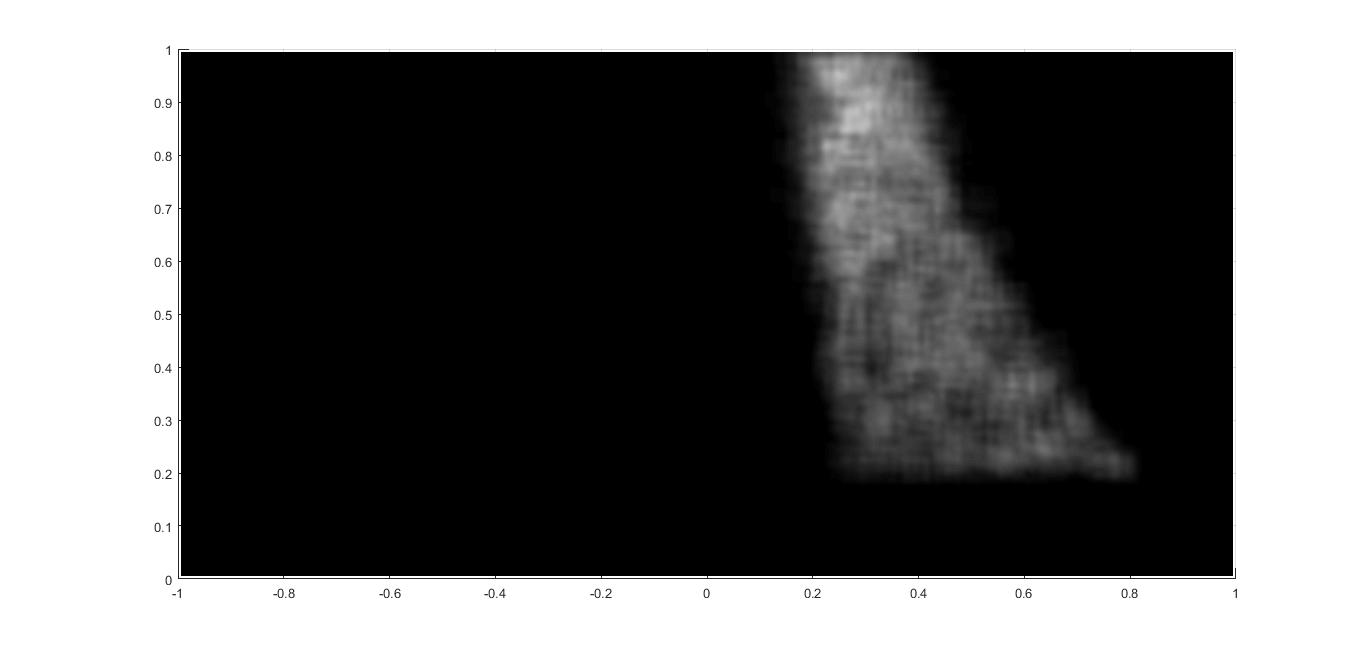} $$ \\
$$ \includegraphics[width=0.5\linewidth, height=0.5\linewidth]{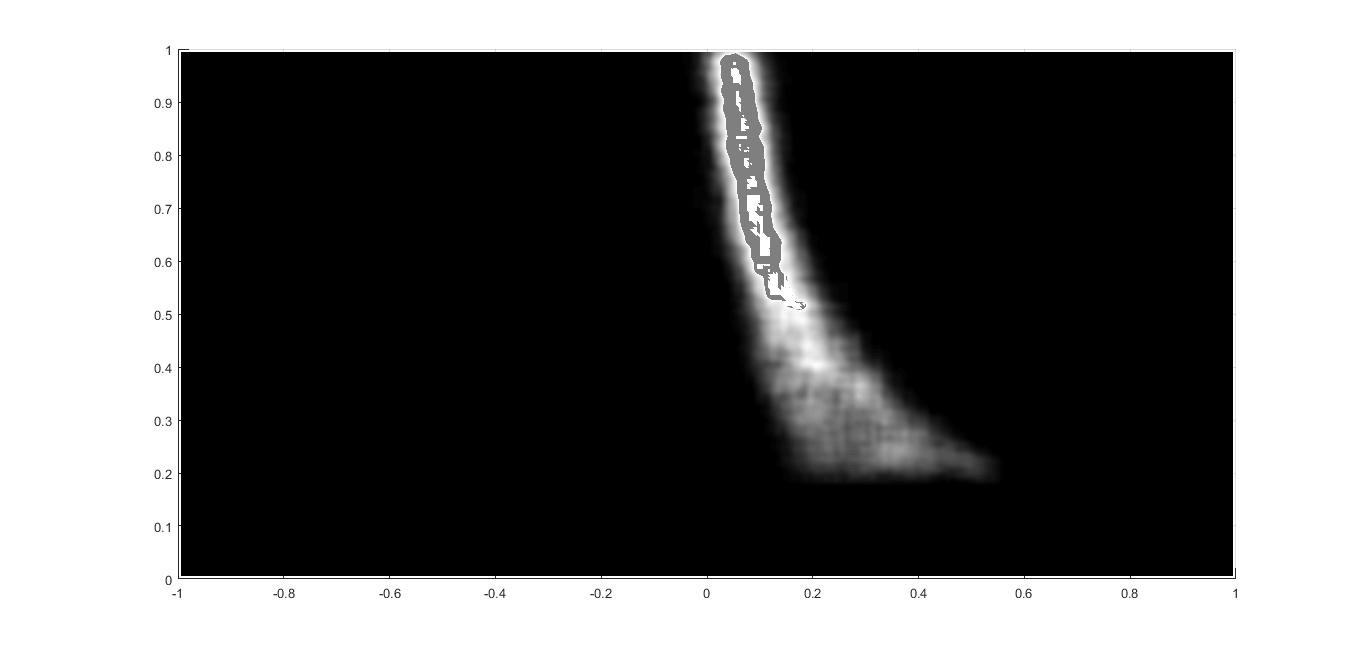} $$
&
$$ \includegraphics[width=0.5\linewidth, height=0.5\linewidth]{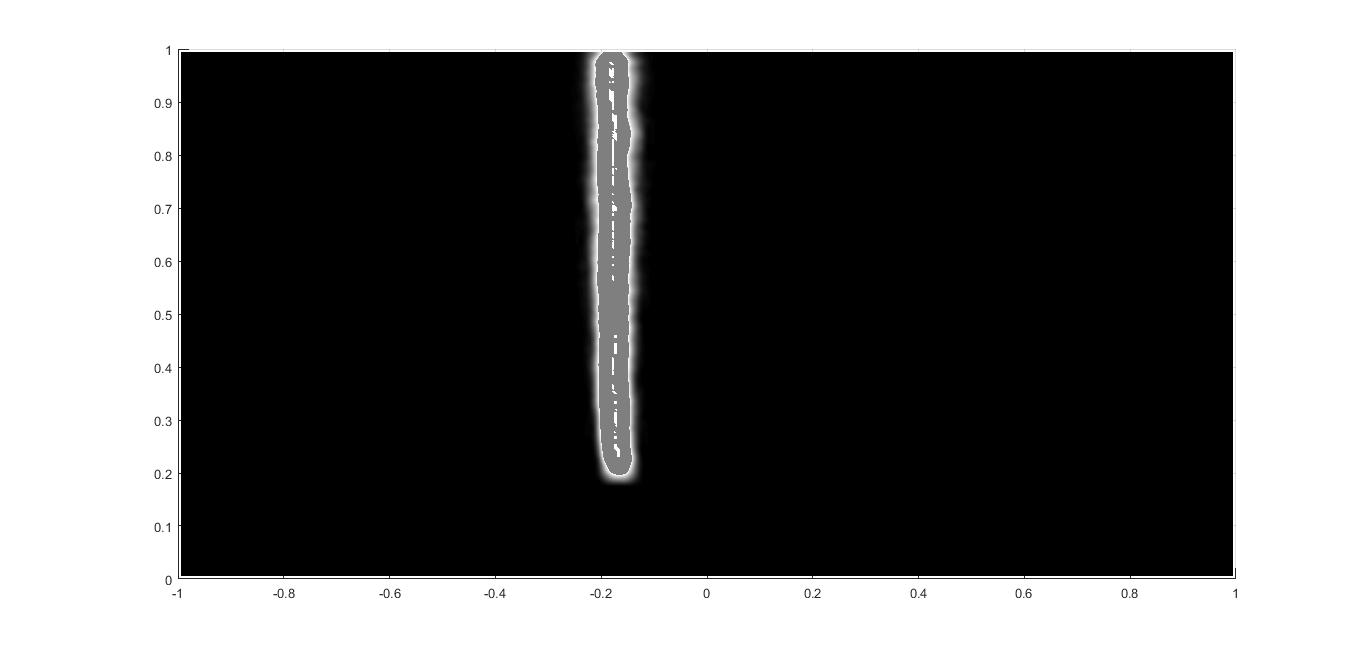} $$ \\
\end{tabular}
\end{center}
\caption{From left to right and top to bottom, the figures represent the density of
$(w,q)$ parameters among the non-stubborn population in gray scale (the whiter is the
graphic, the higher is the density) with opinion $w$ in the horizontal axe, and $q$ in
the vertical axe after respectively 0,37,96,700 $\times 50 000$ interactions.
  The initial values of $(w,p,q)$ are those given in the text. }
\end{table}

\appendix

\section{Existence of a unique solution to the Boltzmann equation. Proof of Theorem~\ref{ThmExistenceBoltzmann}}
The existence of a unique solution to the Boltzmann equation results as an application of the classical Banach Fixed
Point Theorem, as it is sketched  in \cite{CI}. We provide here a detailed proof, for the reader's convenience.
\begin{proof}
 Let us  first introduce some notations. If $f,g\in M(K)$ are given measures, we define
a finite measure $Q(f,g)$ by
\begin{equation}\label{defiQ}\begin{array}{lll}
 (Q(f,g),\phi)
& = & \D\frac12 \int_{K^2\times B}
(\phi(\varpi')-\phi(\varpi))\,df(\varpi)dg(\varpi_*)d\theta(\eta)
\\[0.3cm]
&& + \D\frac12 \int_{K^2\times B}
(\phi(\varpi')-\phi(\varpi))\,dg(\varpi)df(\varpi_*)d\theta(\eta),
\end{array}
\end{equation}
for any $\phi\in C(K)$. Notice that
$$
 |(Q(f,g),\phi)|\le 2\|\phi\|_\infty \|f\|\|g\|,
$$
where $\|f\|$ and $\|g\|$ denote the total variation norm (\ref{norm}) of $f$ and $g$,
respectively. Consequently, the total variation norm of the measure $Q(f,g)$ verifies
that
\begin{equation}\label{BoundBoltz2}
 \|Q(f,g)\|\le 2\|f\|\|g\|.
\end{equation}
Observe for future use that $ Q(f,f)-Q(g,g)=Q(f+g,f-g)$ which yields
\begin{equation}\label{BoundBoltz3}
\|Q(f,f)-Q(g,g)\|\le 2\|f+g\|\|f-g\|.
\end{equation}

 Fix some $T>0$ to be chosen later on.
 Denote by $C_T:=C([0,T],P(K))$ the space of functions from $[0,T]$ with values in  $P(K)$
  being  continuous for the total variation norm.
  The estimate (\ref{BoundBoltz2}) ensures that for all
 $f,g\in C_T$, it holds  $\|Q(f_s,g_s)\|\le C$ for any $s\in [0,T]$.
Thus $\int_0^t \|Q(f_s,g_s)\|\,ds$ is finite and we can consider the integral
$\int_0^t Q(f_s,g_s)\,ds$ in the Bochner sense. Since $P(K)$ is separable (because $K$
is compact), the integration is indeed understood in the Pettis sense. In particular,
\begin{equation}\label{PruebaBoltzDefIntegral}
 (\int_0^t Q(f_s,g_s)\,ds,\phi)=\int_0^t (Q(f_s,g_s),\phi)\,ds, \qquad\text{for any $\phi\in C(K)$. }
\end{equation}

\medskip

Under  these notations,  \eqref{BoltzDebil} can be rewritten  as
$$  \int_K \phi\,df_t = \int_K \phi\,df_0 + \int_0^t (Q(f_s,f_s),\phi)\,ds, $$
i.e.
\begin{equation}\label{Boltz}
f_t = f_0 + \int_0^t Q(f_s,f_s) \,ds =: J(f)(t).
\end{equation}
Our purpose is to find a fixed point of $J$ in the closed subspace $X_T$ of $C_T$ defined as
$$ X_T:=\{ f\in C_T: f(0)=f_0 \mbox{ and }\, \max_{0\le t\le T} \|f_t\|\le 2\|f_0\|\}, $$
where $T$ is sufficiently small. We endow  $X_T$ with the sup-norm given by
$\|f\|_{X_T}=\max_{0\le t\le T} \|f_t\|$.  For any $f\in X_T$, a direct application of
the Dominated Convergence Theorem ensures that $J(f)\in C_T$. Moreover, in view of
\eqref{BoundBoltz2}, we get
\begin{eqnarray*}
 \|J(f)(t)\| & \le & \|f_0\| + \int_0^t \|Q(f_s,f_s)\|\,ds \le \|f_0\| + 2T\max_{0\le t\le T} \|f_s\|^2  \\
               & \le & \|f_0\|+8T\|f_0\|^2.
\end{eqnarray*}
Taking $T\le 1/(8\|f_0\|)$ guarantees  that $J(f)\in X_T$. Next we prove that $J$ is
in fact a strict contraction. Recall that by \eqref{BoundBoltz3}  we know
\begin{eqnarray*}
 \|J(f)(t)-J(g)(t)\|
& \le & \int_0^t \|Q(f_s,f_s)-Q(g_s,g_s)\|\,ds  \\
& \le & 2\int_0^t \|f_s+g_s\|\|f_s-g_s\|\,ds  \\
& \le & 8T\|f_0\|\|f-g\|.
\end{eqnarray*}
The choice e.g. $T= 1/(16\|f_0\|)$ provides that $\|J(f)-J(g)\|\le \frac12 \|f-g\|$.
The existence of a unique fixed point of $J$ in $X_T$ consequently follows.

\medskip

Taking $\phi=1$ in \eqref{BoltzDebil} and recalling that $f_0\in P(K)$ shows that
$\int_K f_t(\varpi)=1$. It just remains to see that  $f_t\ge 0$ to infer that $f_t\in
P(K)$ with $\|f_t\|=\|f_0 \|=1$. At this point, we
 could then repeat the previous argument
to extend $f_t$ to $[T,2T]$, $[2T,3T]$, and so on,  and conclude the existence proof.
 Proposition~\ref{ProofExistenciaBoltzLemmaPositivity} below is devoted  to prove the
non-negativity of $f$, which completes this proof.
\end{proof}

\begin{rem}
\noindent Bearing in mind that $f$ is continuous in time, it is no difficult to see
that $ \int_K \phi(w)df_t(w) $ is a $C^1$ function with respect to $t$, whose  derivative is specified by
\eqref{BoltzDebil2}.

Indeed, with the notations introduced in the previous proof, it holds that
\begin{eqnarray*}
\Big  \|\frac{f_{t+h}-f_t}{h}-Q(f_t,f_t)\Big \|
& =& \Big\|\frac{1}{h}\int_t^{t+h} Q(f_s,f_s)\,ds - Q(f_t,f_t)\Big\| \\
& \le & \frac{1}{h}\int_t^{t+h} \Big\|Q(f_s,f_s)-Q(f_t,f_t)\Big\|\,ds.
 \end{eqnarray*}
Thanks to  \eqref{BoundBoltz3} we infer that
 $$  \Big\|\frac{f_{t+h}-f_t}{h}-Q(f_t,f_t)\Big\|\le 8\|f_0\|\frac{1}{h}\int_t^{t+h} \|f_s-f_t\|\,ds, $$
which goes to 0 as $h\to 0$, since $f$ is continuous. Therefore,  \eqref{BoltzDebil}
can be rewritten as
$$ \p_tf = Q(f,f).$$

\end{rem}

\medskip

We complete the proof of Theorem~\ref{ThmExistenceBoltzmann}  showing the uniqueness
and non-negativity of $f_t$.

\begin{prop} \label{ProofExistenciaBoltzLemmaPositivity}
Let $g_0\in P(K)$ and $\lambda\ge1$. There exists a unique $g \in$
$C^1([0,+\infty),M_+(K))$ such that $g_{|t=0}=g_0$ and for $t>0$ solving
\begin{equation}\label{BoltzExistencePositivityGammaPositif10}
 \p_tg_t + \lambda g_t  = Q(g_t,g_t) + \lambda g_t \int_K dg_t.
\end{equation}

\end{prop}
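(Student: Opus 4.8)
The plan is to recast \eqref{BoltzExistencePositivityGammaPositif10} as a fixed-point equation, exactly as in the proof of Theorem~\ref{ThmExistenceBoltzmann}, but with a modified operator that manifestly preserves non-negativity. Writing $c_t := \int_K dg_t = \|g_t\|$ (for non-negative measures), the equation reads $\p_t g_t = -\lambda g_t + Q(g_t,g_t) + \lambda c_t g_t$, whose Duhamel form is
\begin{equation*}
 g_t = e^{-\lambda t} g_0 + \int_0^t e^{-\lambda(t-s)}\big( Q(g_s,g_s) + \lambda c_s g_s \big)\,ds =: \tilde J(g)(t).
\end{equation*}
First I would fix $T>0$ small and work in the closed subset $\tilde X_T := \{ g\in C([0,T],M_+(K)) : g(0)=g_0,\ \max_{0\le t\le T}\|g_t\|\le 2\|g_0\| \}$ of the Banach space $C([0,T],M(K))$ with the sup-norm, and check that $\tilde J$ maps $\tilde X_T$ into itself: the bound $\|Q(g,g)\|\le 2\|g\|^2$ from \eqref{BoundBoltz2}, together with $\lambda\|c_s g_s\|\le \lambda\|g_s\|^2$ and $e^{-\lambda(t-s)}\le 1$, gives $\|\tilde J(g)(t)\|\le \|g_0\| + (2+\lambda)T\,(2\|g_0\|)^2$, so $T\le 1/\big(4(2+\lambda)\|g_0\|\big)$ suffices to stay in the ball. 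That $\tilde J(g)$ is again a \emph{non-negative} measure is the key structural point: the operator $g\mapsto Q(g,g)+\lambda c_t g$ is of the form $g\mapsto (\text{gain part})\ge 0$ plus $-\lambda \int (\cdots)\, dg$ absorbed into the $-\lambda g_t$ term; more precisely one checks directly from \eqref{defiQ} that $Q(g,g)+\lambda\|g\| g$ has, when tested against $\phi\ge 0$, only the loss term $-\int \phi(\varpi)\,dg(\varpi)dg(\varpi_*)$ of the $Q$-part subtracted, which is dominated by $\lambda\|g\|\int\phi\,dg$ since $\lambda\ge 1$ — hence $Q(g,g)+\lambda\|g\| g\ge 0$ whenever $g\ge 0$, and the Duhamel integral of a non-negative integrand against the positive kernel $e^{-\lambda(t-s)}$ keeps $\tilde J(g)(t)\ge 0$.

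Next I would show $\tilde J$ is a strict contraction on $\tilde X_T$ for $T$ small. Using \eqref{BoundBoltz3}, $\|Q(g,g)-Q(h,h)\|\le 2\|g+h\|\,\|g-h\|\le 8\|g_0\|\,\|g-h\|_{X_T}$, and for the extra term $\|c_s^g g_s - c_s^h h_s\| \le \|g_s\|\,\|g_s-h_s\| + \|h_s\|\,\big|\|g_s\|-\|h_s\|\big| \le 4\|g_0\|\,\|g-h\|_{X_T}$ (since $\big|\|g_s\|-\|h_s\|\big|\le\|g_s-h_s\|$); integrating the sum against $e^{-\lambda(t-s)}\le 1$ over $[0,t]\subset[0,T]$ yields $\|\tilde J(g)-\tilde J(h)\|_{X_T}\le (8+4\lambda)T\|g_0\|\,\|g-h\|_{X_T}$, so choosing e.g. $T=1/\big(2(8+4\lambda)\|g_0\|\big)$ gives contraction constant $1/2$. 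Banach's fixed-point theorem then produces a unique $g\in\tilde X_T$ with $g=\tilde J(g)$; differentiating the Duhamel formula (the integrand is continuous in $s$ because $g$ and hence $Q(g_s,g_s)$ and $c_s g_s$ are continuous, as in the Remark following the proof of Theorem~\ref{ThmExistenceBoltzmann}) shows $g\in C^1([0,T],M_+(K))$ and satisfies \eqref{BoltzExistencePositivityGammaPositif10} on $[0,T]$. Since the length $T$ of the interval depends only on $\lambda$ and $\|g_0\|$, and $\|g_t\|$ can be controlled on $[0,T]$ (testing \eqref{BoltzExistencePositivityGammaPositif10} with $\phi\equiv 1$ gives $\p_t\|g_t\| = (Q(g_t,g_t),1) - \lambda\|g_t\| + \lambda\|g_t\|^2$; and $(Q(g,g),1)$ equals $-\int\int \chi\,dg\,dg$-type loss terms which is $\le 0$, in fact one expects $\|g_t\|\equiv 1$ as in Theorem~\ref{ThmExistenceBoltzmann} so the construction can be iterated on $[T,2T],[2T,3T],\dots$ with uniform step), the local solution extends to all of $[0,+\infty)$, and uniqueness on each subinterval glues to global uniqueness.

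The main obstacle I anticipate is pinning down the non-negativity claim cleanly, i.e.\ verifying that $Q(g,g)+\lambda\|g\|g\ge 0$ for $g\in M_+(K)$ and $\lambda\ge 1$: one must unwind the definition \eqref{defiQ}, separate $Q(g,g)$ into its gain part $\frac12\int\int(\phi(\varpi')+\phi(\varpi'_*)\text{-type terms})$ and loss part $-\int\int\phi(\varpi)\,dg\,dg$, and observe that the transition kernel $\beta$ is a sub-probability in the post-collisional variable (its total mass against $1$ is at most $1$ because of the truncation $\chi_{|w'|\le1}\chi_{|w'_*|\le1}$ and $\int d\theta=1$), so the loss rate per unit mass is at most $\|g\|\le\lambda\|g\|$. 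Once this monotonicity of the nonlinear operator on the positive cone is established, the rest is the routine Banach fixed-point argument already carried out for Theorem~\ref{ThmExistenceBoltzmann}, now run in $M_+(K)$ rather than $P(K)$.
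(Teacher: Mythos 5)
Your proposal is correct, and it shares with the paper the two essential ingredients: the Duhamel (mild) reformulation $g_t=e^{-\lambda t}g_0+\int_0^t e^{-\lambda(t-s)}\Gamma(g_s)\,ds$ with $\Gamma(g)=Q(g,g)+\lambda g\int_K dg$, and the structural observation that, by the decomposition $Q=Q_+-Q_-$ with $Q_-(g,g)=g\int_{K\times B}dg\,d\theta$ as in \eqref{ProofExistenciaBoltzDefQ-}, one has $\Gamma(g)=Q_+(g,g)+(\lambda-1)\,g\int_K dg\ge 0$ on the positive cone precisely because $\lambda\ge 1$ — this is exactly the paper's positivity lemma for $\Gamma$. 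Where you genuinely diverge is in the fixed-point mechanism: you run a Banach contraction for $\tilde J$ on the closed subset $\tilde X_T$ of $C([0,T],M_+(K))$, using \eqref{BoundBoltz3} plus the elementary Lipschitz estimate for $g\mapsto \|g\|g$, whereas the paper exploits the additional monotonicity $\Gamma(g)\ge\Gamma(f)$ for $g\ge f\ge 0$ (via $\Gamma(g)-\Gamma(f)=\Gamma(g+f,g-f)$) to build the solution as the increasing limit of the iterates \eqref{BoltzExistencePositivityGammaSequence} starting from $g^0=0$, with the uniform mass bound $g_t^n(K)\le1$ and Arzela--Ascoli supplying compactness, and then proves uniqueness separately by Gronwall. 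Your route is more direct and gives local uniqueness for free from the contraction, at the price of having to verify the extra Lipschitz estimate for the term $\lambda\|g_s\|g_s$ and of only getting uniqueness within the ball $\|g_t\|\le 2\|g_0\|$ (which you correctly repair by noting that testing \eqref{BoltzExistencePositivityGammaPositif10} against $\phi\equiv1$ forces $m(t)=g_t(K)$ to solve $m'=\lambda m(m-1)$, so $m\equiv1$ — in fact $(Q(g,g),1)=0$ exactly, not merely $\le0$, since the integrand $\phi(\varpi')-\phi(\varpi)$ in \eqref{defiQ} vanishes; this also makes the continuation to $[T,2T],[2T,3T],\dots$ with a uniform step legitimate). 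The paper's monotone scheme, conversely, obtains positivity of the limit with no cone-invariance check for the map itself, but needs the separate Gronwall argument for uniqueness. Both arguments are complete; yours is an acceptable, somewhat more streamlined alternative.
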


\begin{rem}  Notice that
$\|f_t\|=1$ guarantees that $f_t$ is a solution to
$(\ref{BoltzExistencePositivityGammaPositif10})$. By uniqueness $f_t$ must belong to
$M_+(K)$, hence is nonnegative.
\end{rem}

\begin{proof}
We begin introducing some definitions. Let  $\Gamma:M(K)\times M(K)\to M(K)$ be a
measure determined  by $\Gamma(f,g)=Q(f,g)+\frac{\lambda}{2}(g\int f+f\int g)$. Denote
$\Gamma(f):=\Gamma(f,f)$ and $Q(f)=Q(f,f)$. In view of \eqref{BoundBoltz3},
$\Gamma(f)$ is continuous in $f$ with respect to the total variation norm.

Moreover, we claim that  $\Gamma(f,g)\ge 0$ if $f$ and $g$ are non-negative. To see
this, note that the measure $Q$ can be represented by $Q(f,g) = Q_+(f,g) - Q_-(f,g)$
with
\begin{equation}\label{ProofExistenciaBoltzDefQ+}
 (Q_+(f,g),\phi)
=  \frac12 \int_{K^2\times B}
\phi(\varpi')\,\Big(df(\varpi)dg(\varpi_*)+dg(\varpi)df(\varpi_*)\Big)d\theta(\eta)
\end{equation}
and
\begin{eqnarray}\label{ProofExistenciaBoltzDefQ-}
 Q_-(f,g) = \frac12  \left(f\int_{K\times B} dg(\varpi)d\theta(\eta)+g\int_{K\times B} df(\varpi)d\theta(\eta)\right),
\end{eqnarray}
for any $\phi\in C(K)$.

Then, $\Gamma$ can be  expressed as
$$ \Gamma(f,g) = Q_+(f,g) + \frac12 (\lambda-1) \left(f\int_{K\times B} dg(\varpi)d\theta(\eta)+g\int_{K\times B} df(\varpi)d\theta(\eta)\right)\ge 0, $$
because $\lambda\ge 1$ and the claim follows.

Furthermore, whenever $g\ge f\ge 0$,
\begin{equation}\label{BoltzExistencePositivityGammaPositif}
\Gamma(g)\ge \Gamma(f)\ge 0.
\end{equation}
Indeed, since  $g+f$ and $g-f$ are non-negative measures,
$$ \Gamma(g)-\Gamma(f)=\Gamma(g+f,g-f)\ge 0. $$

We need to find $g\in  C([0,+\infty,M_+(K))$ such that  $\int g\le 1$ and
\begin{equation}\label{BoltzExistencePositivity2}
 g_t = e^{-\mu t}g_0 + \int_0^t e^{-\mu(t-s)}\Gamma(g_s)\,ds.
\end{equation}

It will be obtained as the limit of the following sequence $g^n:[0,+\infty)\to M(K)$,
$n\ge 0$, defined iteratively by $g^0=0$ and
\begin{equation}\label{BoltzExistencePositivityGammaSequence}
 g^n_t = e^{-\lambda t}g_0 + \int_0^t e^{-\lambda(t-s)}\Gamma(g^{n-1}_s)\,ds.
\end{equation}
Since $g_0\ge 0$ and the measure $\Gamma$ is continuous, non-negative and
non-decreasing, it is easy to see that  $g^n_t\ge g^{n-1}_t\ge 0$ for any $n$ and
$t>0$. Clearly, $g^n\in C([0,+\infty),M(K))$. Even more, by integrating  equation
\eqref{BoltzExistencePositivity2} in $K$,  taking into account that $\int Q(f)=0$ for
any $f\in M(K)$, we deduce that the total mass $g_t^n(K)=\int_K dg^n_t$ satisfies
$$ g_t^n(K)=e^{-\lambda t} +\lambda \int_0^t e^{-\lambda(t-s)}\Big(g^{n-1}_s(K)\Big)^2\,ds. $$
By induction,  $g_t^n(K)\le 1$  for any $t$.
Therefore, for any non-negative $\phi\in C(K)$, the sequence $(\int_K\phi\, dg^n_t)_n$
is non-decreasing and bounded. It ensures the existence of a limit
$(g_t,\phi):=\lim_{n\to\infty} \int \phi\, dg^n_t$.

The estimate  \eqref{BoundBoltz2} yields that $\|\Gamma(g^n_t)\|\le 2+\lambda$
uniformly in $n,t$. Then for any $T>0$, it  follows then that
$$  \|g^n_t-g^n_s\|\le C(T)|s-t|,$$
for any $s,t\in [0,T]$ and any $n$. Applying Arzela-Ascoli theorem, we have, up to a
subsequence, that $g^n\to g$ in $C_{loc}([0,+\infty),M^+(K))$, which implies that $g_t(K)\leq 1$ and
 $g\in C_{loc}([0,+\infty),M_+(K))$. Passing to the limit in \eqref{BoltzExistencePositivityGammaSequence},
we get  that $g$ satisfies \eqref{BoltzExistencePositivity2}.

Observe that the continuity of $\Gamma$ guarantees that $g$ belongs  in fact to $C^1$.

Eventually, if $\tilde g$ is another solution of  \eqref{BoltzExistencePositivity2},
then, by  \eqref{BoundBoltz3},
$$ \|g_t-\tilde g_t\|\le \int_0^t  e^{-\mu(t-s)}\|\Gamma(g_s)-\Gamma(\tilde g_s)\|\,ds
 \le C\int_0^t  e^{-\mu(t-s)} \|g_s-\tilde g_s\|\,ds, $$
so that $\|g_t-\tilde g_t\|=0$ by Gronwall's Lemma. As a result $g=\tilde g$ and
the proof is finished.
\end{proof}

\section{Grazing Limit}

We perform exhaustively the passage to the grazing limit, considering all of the possible balances between the transport and the diffusion terms. Our proof is based on the arguments given in  \cite{T}, adapted  to our specific model.

\begin{thm} \label{ThmGrazingLimit}
 In the interaction rule \eqref{Regla} admit that $\sigma^2=\gamma\lambda$ for some $\lambda>0$.
Given an initial condition $f_0 \in P(K)$, consider the solution, $f$, of the
Boltzmann-like equation \eqref{BoltzDebil2} given by Theorem
\ref{ThmExistenceBoltzmann}. If $f_\gamma(\tau):=f(t)$,  stands for the time rescaled
probability density according to $\tau=\gamma t$, it holds, up to subsequences, that
$f_\gamma\to g$ as $\gamma\to 0$ in $C([0,T],P(K))$ for any $T>0$. Furthermore, the
limit $g\in C([0,+\infty),P(K))$   satisfies
 for any $\tau\ge 0$ and any $\phi\in C^\infty(K)$,
\begin{equation}\label{GrazingLimitEqu}
\begin{split}
  \int_K  \phi \,dg_\tau
    = & \int_K \phi \,df_0
   + \int_0^\tau \int_K (m(\tau)-w) \pmean  q\phi_w(\varpi) \, dg_s(\varpi) ds \\
      & + \frac{\lambda}{2}\int_0^\tau \int_Kq^2 D^2(|w|)\phi_{ww}(\varpi) \, dg_s(\varpi) ds.
\end{split}
\end{equation}
Moreover, if $\frac{\sigma^2}{\gamma}=\lambda\to 0$, then  $g\in C([0,+\infty),P(K))$ verifies the transport equation
\begin{equation}\label{FPSinRuidoDebil}   \int_K  \phi \,dg_\tau = \int_K \phi \,df_0
    + \int_0^\tau \int_K (m(\tau)-w)\pmean q\phi_w(\varpi) \, dg_s(\varpi) ds.
    \end{equation}
    If conversely, $\frac{\sigma^2}{\gamma}\to
+\infty$,   rescaling time as
$\tau:=\gamma^\alpha t$, for some $\alpha \in (0,1)$, it holds that $f_\gamma\to g$ as $\gamma\to 0$, where $g$ is determined by \begin{equation}\label{dif}\int_K  \phi \,dg_\tau = \int_K  \phi \,df_0
+ \frac{\lambda}{2}\int_0^\tau\int_{K^2} \phi_{ww}(\varpi) q^2
D^2(|w|)\,dg_\tau(\varpi),
\end{equation}
being $\lambda>0$ now such that $\sigma^2=\lambda\gamma^\alpha$.

\end{thm}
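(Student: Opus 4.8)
The plan is to carry out the classical grazing-limit expansion in the spirit of Toscani \cite{T}, adapted to the present setting with $\varpi=(w,p,q)$ (recall the interaction leaves $p$ and $q$ unchanged). Fix $\phi\in C^\infty(K)$ and Taylor-expand in the opinion variable,
\[
\phi(\varpi')-\phi(\varpi)=\phi_w(\varpi)(w'-w)+\tfrac12\phi_{ww}(\varpi)(w'-w)^2+R,\qquad |R|\le\tfrac16\|\phi_{www}\|_\infty|w'-w|^3,
\]
where $w'-w=\gamma qp_*(w_*-w)+\eta qD(|w|)$ and $\eta=\sigma Y$ with $E[Y]=0$, $E[Y^2]=1$, $E[|Y|^3]<\infty$. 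Inserting this into \eqref{BoltzDebil2} and integrating against $df_t(\varpi)\,df_t(\varpi_*)\,d\theta(\eta)$: the linear term produces, after using that $\int p_*\,df_t(\varpi_*)=\pmean$ is constant in time and that $\theta$ has zero mean, the transport contribution $\gamma\pmean\int_K q(m(t)-w)\phi_w\,df_t$ with $m(t):=\int_K\frac{p}{\pmean}w\,df_t$; the quadratic term produces $\frac{\sigma^2}{2}\int_K q^2D^2(|w|)\phi_{ww}\,df_t$ plus a term of order $\gamma^2$, the mixed $\gamma\eta$ contribution vanishing by the zero-mean of $\theta$; and the Taylor remainder is of order $\gamma^3+\sigma^3$, since $\int|\eta|^3\,d\theta=\sigma^3E[|Y|^3]<\infty$. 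I would also check that the restriction on the support of $\eta$ enforcing $|w'|\le1$ (for instance $|\eta|\le(1-\gamma)/2$ when $D=1-w^2$) is compatible with the scalings used below, since there $\sigma\to0$.

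Next I would rescale time. In the balanced case $\sigma^2=\gamma\lambda$, set $\tau=\gamma t$ and $f_\gamma(\tau)=f(t)$; dividing the expanded identity by $\gamma$ gives, for every $\phi\in C^\infty(K)$,
\[
\frac{d}{d\tau}\int_K\phi\,df_{\gamma,\tau}=\pmean\int_K q(m_\gamma(\tau)-w)\phi_w\,df_{\gamma,\tau}+\frac{\lambda}{2}\int_K q^2D^2(|w|)\phi_{ww}\,df_{\gamma,\tau}+\mathcal E_\gamma(\tau),
\]
where $m_\gamma(\tau)=\int_K\frac{p}{\pmean}w\,df_{\gamma,\tau}$ and $|\mathcal E_\gamma(\tau)|\le C(\phi)(\gamma+\gamma^{1/2})$ uniformly in $\tau$. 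Since total mass is preserved, $f_{\gamma,\tau}\in P(K)$ for all $\tau$; because $K$ is compact, $P(K)$ is compact and metrizable for the weak topology. The displayed identity shows that $\tau\mapsto\int_K\phi\,df_{\gamma,\tau}$ is Lipschitz with a constant depending only on $\phi$ and not on $\gamma$, so testing against a countable family of smooth functions dense in $C(K)$ yields equicontinuity in $\tau$ for a metric inducing weak convergence. By Arzela--Ascoli, $(f_\gamma)_\gamma$ is relatively compact in $C([0,T],P(K))$ for every $T>0$; extract a subsequential limit $g$.

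To pass to the limit, note that for a fixed smooth $\phi$ one has $\mathcal E_\gamma\to0$ uniformly on $[0,T]$, and that for each $s$ the weak convergence $f_{\gamma,s}\to g_s$ together with the boundedness and continuity of $\varpi\mapsto\frac{p}{\pmean}w$ gives $m_\gamma(s)\to m(s):=\int_K\frac{p}{\pmean}w\,dg_s$, uniformly on $[0,T]$; dominated convergence then yields \eqref{GrazingLimitEqu} from the integrated weak formulation. The remaining two cases differ only in the rescaling: if $\sigma^2/\gamma=\lambda\to0$ the same $\tau=\gamma t$ makes the diffusion coefficient $\lambda/2\to0$, so the second-order term drops out and the limit is the transport equation \eqref{FPSinRuidoDebil}; if $\sigma^2/\gamma\to+\infty$, choose $\tau=\gamma^\alpha t$ with $\sigma^2=\lambda\gamma^\alpha$, $\alpha\in(0,1)$, so that dividing by $\gamma^\alpha$ leaves the diffusion term with coefficient $\lambda/2$ while the transport term is now $O(\gamma^{1-\alpha})\to0$ and the remainders are $O(\gamma^{2-\alpha}+\gamma^{\alpha/2})\to0$, giving \eqref{dif}.

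I expect the main difficulty to be essentially bookkeeping, in two places. First, verifying that in each of the three scalings \emph{every} remainder term truly vanishes: the $O(\gamma^2)$ from the squared linear part, the $O(\gamma^3)+O(\sigma^3)$ Taylor remainder, and any contribution from truncating the support of $\eta$; here the exact cancellation $\int\eta\,d\theta=0$ for the mixed term and the third-moment hypothesis $E[|Y|^3]<\infty$ are precisely what make the estimates close. Second, upgrading the per-test-function Lipschitz bound to relative compactness in $C([0,T],P(K))$ uniformly in $\gamma$ — the standard but slightly technical Arzela--Ascoli argument on spaces of measures — after which the passage to the limit in the nonlinear drift is routine dominated convergence.
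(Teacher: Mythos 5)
Your proposal is correct and follows essentially the same route as the paper's proof in the Supplement: second-order Taylor expansion in $w$ of the test function, identification of the drift term via $\int p_*\,df_t(\varpi_*)=\pmean$ and the zero mean of $\theta$, a remainder bound of order $\gamma^{2}+\sigma^{3}/\gamma$ after rescaling using $E[|Y|^3]<\infty$, equicontinuity of $\tau\mapsto\int\phi\,df_{\gamma,\tau}$ uniformly over a suitable class of smooth $\phi$, Arzel\`a--Ascoli in $C([0,T],P(K))$, and passage to the limit in the nonlinear drift through uniform convergence of $m_\gamma$. The only cosmetic difference is that you use a countable dense family of test functions where the paper works with the norm $\sup_{\|\phi\|_{C^3}\le1}\int\phi\,d\mu$, which metrizes the same weak topology.
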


\begin{proof}
First of all consider the case $\sigma^2=\gamma\lambda$ for some $\lambda>0$. Let  $\phi\in C^3(K)$. The rescaled measure  $f_{\gamma}(\tau)$  solves
$$
\begin{array}{ll}
 \D\frac{d}{d\tau} \int_K  \phi(\varpi)f_{\gamma}(\tau,\varpi) d\varpi
  \\=\D\frac1{\gamma}\int_B \int_{K^2}
  ( \phi(\varpi')-\phi(\varpi)) f_{\gamma}(\tau,\varpi) f_{\gamma}(\tau,\varpi_*) d\varpi d\varpi_*d\theta(\eta).
\end{array}
$$
Recall that $\varpi=(w,p,q)$ and $\varpi'=(w',p,q)$. We perform a Taylor expansion of
$\phi$ (with respect to the $w$ variable) up to second order:
$$
\phi(\varpi')-\phi(\varpi)=\phi_w(\varpi)(w'-w)+\frac12\phi_{ww}(\tilde
\varpi)(w'-w)^2,
$$
with $\tilde \varpi=(\tilde w,p,q)$ being  $\tilde w=\theta w+(1-\theta)w'$ for some
$\theta\in (0,1)$. Note that $\int_B\eta\Theta(\eta)d\eta=0$ and
$\int_B\eta^2\Theta(\eta)d\eta=\sigma^2$. Then substituting this expansion into the
previous equation and using the updating rules  \eqref{Regla}, it yields
\begin{equation} \label{Grazing1}
\begin{split}
 & \frac{d}{d\tau} \int_K  \phi(\varpi)f_\gamma(\tau,\varpi) d\varpi  \\
  & =\D\int_{K^2}\phi_w(\varpi) p_*q(w_*-w) f_{\gamma}(\tau,\varpi) f_{\gamma}(\tau,\varpi_*) d\varpi d\varpi_*\\
  & \hspace{0.3cm} +\D\frac{\sigma^2}{2\gamma}\int_{K}\phi_{ww}(\varpi) q^2 D^2(|w|)f_{\gamma}(\tau,\varpi)  d\varpi \\
 & \hspace{0.3cm}  +\D\frac{\gamma}{2}\int_{K^2}\phi_{ww}(\varpi)  p_*^2q^2(w_*-w)^2 f_{\gamma}(\tau,\varpi) f_{\gamma}(\tau,\varpi_*) d\varpi d\varpi_* +R(\tau,\gamma,\sigma),
\end{split}
\end{equation}
where
$$\begin{array}{l}
\D R(\tau,\gamma,\sigma)=\frac{1}{2\gamma}\int_B \int_{K^2}\Theta(\eta)\Big(\gamma p_*q(w_*-w)+\eta qD(|w|)\Big)^2\\[0.4cm]\hspace{4cm}\D(\phi_{ww}(\tilde \varpi)-\phi_{ww}(\varpi))f_{\gamma}(\tau,\varpi)f_{\gamma}(\tau,\varpi_*)d\varpi d\varpi_*d\eta.
\end{array}$$

Observe that the first integral in \eqref{Grazing1} can be written as follows
$$\begin{array}{ll}
I&=\D \int_{K} q\phi_w(\varpi)f_{\gamma}(\tau,\varpi)d\varpi
         \int_K p_*w_*f_{\gamma}(\tau,\varpi_*) d\varpi_*\\
&\hspace{0.4cm} \D -\int_{K}qw\phi_w(\varpi)f_{\gamma}(\tau,\varpi)d\varpi
                             \int_{K} p_*f_{\gamma}(\tau,\varpi_*) d\varpi_*\\
&=\D \int_{K}( \langle wp\rangle-\pmean w)q\phi_w(\varpi)f_{\gamma}(\tau,\varpi)d\varpi \\
&\D=\int_{K}(m_\gamma(\tau)-w)\pmean q\phi_w(\varpi)f_{\gamma}(\tau,\varpi)d\varpi,
\end{array}$$
where $m_\gamma(\tau):=\frac1{\pmean }\int_K wpf_{\gamma}(\tau,\varpi)d\varpi$. The
previous analysis implies that
$$
\begin{array}{ll}
 \D\frac{d}{d\tau} \int_K  \phi(\varpi)f_{\gamma}(\tau,\varpi) d\varpi
  \\=\D\int_{K}(m_\gamma(\tau)-w)\pmean  q\phi_w(\varpi)f_{\gamma}(\tau,\varpi)d\varpi \\
    \hspace{0.4cm}+\D \frac{\sigma^2}{2\gamma}\int_{K}q^2 D^2(|w|)\phi_{ww}(\varpi)f_{\gamma}(\tau,\varpi)d\varpi \\
  \hspace{0.4cm}+\D\frac{\gamma}{2}\int_{K^2}\phi_{ww}(\varpi)  p_*^2q^2(w_*-w)^2 f_{\gamma}(\tau,\varpi) f_{\gamma}(\tau,\varpi_*) d\varpi d\varpi_* +R(\tau,\gamma,\sigma),
\end{array}
$$
which integrated in time gives,
 \begin{equation}\label{gorda}
\begin{array}{ll}
 \D \int_K  \phi(\varpi)(f_{\gamma}(\tau',\varpi)-f_{\gamma}(\tau,\varpi)) d\varpi   \\
  =\D\int_{\tau}^{\tau'}\int_{K}(m_\gamma(\tau)-w)\pmean q\phi_w(\varpi)f_{\gamma}(s,\varpi)d\varpi ds\\
    \hspace{0.4cm} \D +\frac{\sigma^2}{2\gamma}\int_{\tau}^{\tau'}\int_{K}q^2 D^2(|w|)\phi_{ww}(\varpi)f_{\gamma}(s,\varpi)d\varpi ds \\
 \hspace{0.4cm} +\D\frac{\gamma}{2}\int_{\tau}^{\tau'}\int_{K^2}\phi_{ww}(\varpi)  p_*^2q^2(w_*-w)^2 f_{\gamma}(s,\varpi) f_{\gamma}(s,\varpi_*) d\varpi d\varpi_* ds \\
\hspace{0.4cm} \D+\int_{\tau}^{\tau'}R(s,\gamma,\sigma)ds.
\end{array}
\end{equation}

We now show that, whenever $\sigma^2/\gamma$ remains bounded as $\gamma,\sigma\to 0$,
then
\begin{equation} \label{GrazingRemainder}
 \lim_{\gamma,\sigma\to 0} R(\tau,\gamma,\sigma)\to 0 \qquad \text{uniformly in $\tau\in \R$. }
\end{equation}
Using that $\varpi=(w,p,q)$, $\varpi'=(w',p,q)$ and $|\tilde w-w|=(1-\theta)|w'-w|\leq
|w'-w|$, we easily see that
$$
|\phi_{ww}(\tilde \varpi)-\phi_{ww}(\varpi)|\leq\|\phi_{www}\|_\infty|\tilde w-w|\leq
\|\phi_{www}\|_\infty|w'-w|.
$$
As a result,
\begin{align*}
|R(\tau,\gamma,\sigma)|& \leq \D
\frac{\|\phi_{www}\|_\infty}{2\gamma}\int_{B}\int_{K^2}\Theta(\eta)\Big|\gamma
p_*q(w_*-w) \\
 & \quad +\eta
qD(|w|)\Big|^3f_{\gamma}(\tau,\varpi)f_{\gamma}(\tau,\varpi_*)d\varpi d\varpi_*d\eta.
\end{align*}
Applying the inequality $(a+b)^3\leq 8(a/2+b/2)^3\leq 4(a^3+b^3)$ and taking into
account that $p_*,q,\gamma,D(|w|)\in [0,1]$ and $w,w_*\in[-1,1]$, we deduce that
$$
 \left|\gamma p_*q(w_*-w)+\eta qD(|w|)\right|^3\leq 4( |\gamma p_*q(w_*-w)|^3+|\eta qD(|w|)|^3)\leq 32\gamma^3+4\eta^3.
$$
Consequently,
\begin{equation}\label{GrazingCotaR}
 |R(\tau,\gamma,\sigma)|
=\|\phi_{www}\|_\infty\left(16\gamma^2+\frac{2\sigma^2}{\gamma}\sigma E[|Y|^3]\right).
\end{equation}
and the limit \eqref{GrazingRemainder}  follows since we assumed $E[|Y|^3]<\infty$.

We denote $X=C^3(K)$ with the usual norm $\|\phi\|_X=\sum_{|\alpha|\le 3}
\|\p^\alpha\phi\|_\infty$. Recall that $p,q,p^*,q^*, D(|w|)\in[0,1]$,
$f_{\gamma}(\tau, \cdot)\in P([-1,1])$ for all $\tau$ and $m_\gamma(t), w, w^*\in
[-1,1]$. Invoking \eqref{gorda} and \eqref{GrazingCotaR} it can be inferred that

\begin{equation*}
\begin{split}
& \left|\int_K  \phi(\varpi)(f_{\gamma}(\tau,\varpi)-f_{\gamma}(\tau',\varpi)) d\varpi\right|  \\
&\qquad \leq \D\left[2\|\phi_w\|_\infty+\|\phi_{ww}\|_\infty\left(2\gamma+\frac{\sigma^2}{\gamma}\right)\right. \\
&\qquad \qquad \left. +\|\phi_{www}\|_\infty\left(16\gamma^2+\frac{2\sigma^2}{\gamma}\sigma E[|Y|^3]\right)\right](\tau'-\tau)\\
&\qquad  \leq \|\phi\|_{X}\left(2+2\gamma+16\gamma^2+\frac{2\sigma^2}{\gamma}\sigma E[|Y|^3]+\frac{\sigma^2}{\gamma}\right)(\tau'-\tau). \\
&\qquad = C \|\phi\|_X (\tau'-\tau).
\end{split}
\end{equation*}
Taking supremum gives
\begin{equation}\label{GrazingEquCont}
\sup_{\phi\in X,\|\phi\|_{X}\leq 1}\left| \int_K
\phi(\varpi)(f_{\gamma}(\tau,\varpi)-f_{\gamma}(\tau',\varpi)) d\varpi\right|\leq
C(\tau'-\tau).
 \end{equation}
Define
\begin{equation}\label{GrazingDefNorm}
 \|\mu\| :=\sup_{\phi\in X,\|\phi\|_{X}\leq 1}\int_K\phi\,d\mu.
\end{equation}
Then, \eqref{GrazingEquCont} can be read as
$$ \|f_\gamma( \tau)-f_\gamma(\tau')\|\le C|\tau'-\tau|, $$
for any $\gamma\in [0,1]$ and any $\tau,\tau'\in [0,+\infty)$, where the constant $C$
is independent of $\gamma,\tau,\tau'$. It can be shown  that the norm in
\eqref{GrazingDefNorm}  induces  the weak topology on $P(K)$ (see Ref.[17], Lemma 5.3
and, Corollary 5.5). We have thus shown that the sequence of continuous probability
measure valued functions $f_\gamma:[0,+\infty)\to P(K)$ are uniformly equicontinuous.
In addition, $\|f_\gamma(\tau)\| \leq 1$ for any $\tau$ and $\gamma$, hence
Arzela-Ascoli  theorem, together with a diagonal argument, ensure the existence of
$g\in C([0,\infty);P(K))$ and a subsequence $(\gamma_n)_n$ converging to $0$ such that
$f_{\gamma_n} \to g$ in $C([0,T];P(K))$ for any $T>0$.

It remains to pass to the limit  in \eqref{gorda}. Since the norm in
\eqref{GrazingDefNorm} metrizes the weak convergence, it is well known (see for example \cite{V}) that
$$ \max_{\tau\in[0,T]}\|f_{\gamma_n}(\tau)-g(\tau)\| \to 0\quad \mbox{ as } n\to \infty, $$
can be expressed in terms of the Wasserstein distance as
\begin{equation}\label{disto}
\lim_{n\to\infty}\max_{\tau\in[0,T]}W_1(f_{\gamma_n}(\tau),g(\tau))= 0.
\end{equation}
We can rewrite  (\ref{disto}) as
\begin{equation}\label{forlip}
\int_K\varphi(\varpi)f_{\gamma_n}(\tau,\varpi)d\varpi \to
\int_K\varphi(\varpi)g(\tau,\varpi)d\varpi,
\end{equation}
uniformly on compacts $0\leq \tau\le T$ for any $T>0$ and for any Lipschitz function
$\varphi$. As a result we have
$$ m_\gamma(\tau) = \frac 1{\pmean}\int_K wp f_\gamma(\tau,\varpi) d\varpi
 \to \frac 1{\pmean}\int_K wp  g(\tau,\varpi) d\varpi =: m(\tau),  $$
uniformly for $\tau \in [0,T]$, $T>0$. Passing to the limit  in \eqref{gorda} this shows  that for any $\phi\in C^3(K)$
and any $\tau'\ge \tau\ge 0$,
\begin{equation*}
\begin{split}
  \int_K  \phi \,dg_{\tau'}
    = & \int_K \phi \,dg_\tau
    + \int_\tau^{\tau'} \int_K (m(\tau)-w) \pmean q\phi_w(\varpi) \, dg_s(\varpi) ds \\
      & + \frac{\lambda}{2}\int_\tau^{\tau'}\int_Kq^2 D^2(|w|)\phi_{ww}(\varpi) \, dg_s(\varpi) ds,
\end{split}
\end{equation*}
which proves  \eqref{GrazingLimitEqu} as desired.

Admit now that $\frac{\sigma^2}{\gamma}\to 0$. Taking limit as $\lambda\to0$ in
\eqref{gorda} shows \eqref{FPSinRuidoDebil}.

Finally, suppose that $\sigma^2=\lambda\gamma^\alpha$ for some $\lambda>0$ and
$\alpha\in (0,1)$. In particular $\frac{\sigma^2}{\gamma}\to +\infty$ as $\gamma\to
0$, hence the diffusion dominates the transport. Rescaling time as
$\tau:=\gamma^\alpha t$, \eqref{Grazing1} now reads as
\begin{equation*}
\begin{split}
 & \frac{d}{d\tau} \int_K  \phi(\varpi)f_\gamma(\tau,\varpi) d\varpi  \\
  & =\D \gamma^{1-\alpha} \int_{K^2}\phi_w(\varpi) p_*q(w_*-w) f_{\gamma}(\tau,\varpi) f_{\gamma}(\tau,\varpi_*) d\varpi d\varpi_*\\
  & \hspace{0.3cm} +\D\frac{ \lambda}{2}\int_{K}\phi_{ww}(\varpi) q^2 D^2(|w|)f_{\gamma}(\tau,\varpi)  d\varpi \\
 & \hspace{0.3cm}  +\D\frac{\gamma^{2-\alpha}}{2}\int_{K^2}\phi_{ww}(\varpi)  p_*^2q^2(w_*-w)^2 f_{\gamma}(\tau,\varpi) f_{\gamma}(\tau,\varpi_*) d\varpi d\varpi_* +\tilde R(\tau,\gamma,\sigma),
\end{split}
\end{equation*}
where $\tilde R(\tau,\gamma,\sigma) = \gamma^{3-\alpha}R(\tau,\gamma,\sigma)$. Using
\eqref{GrazingCotaR}, we have
$$|\tilde R(\tau,\gamma,\sigma)|\le \|\phi\|_X (16\gamma^{5-\alpha} + 2\lambda\gamma^2\sigma E|Y|^3)
= o(1)\|\phi\|_X.$$ Arguing as before it can be shown that the limit $g$ satisfies \eqref{dif}, and the proof is complete.

\end{proof}

\section*{Acknowledgment}

This work was partially supported by Universidad de Buenos Aires under grants
20020150100154BA and
 20020130100283BA,   by ANPCyT PICT2012 0153 and PICT2014-1771,   CONICET (Argentina) PIP 11220150100032CO and 5478/1438. All the authors are members of CONICET, Argentina.
 MPL and NS wish to thank the members of UNSL-IMASL, for their hospitality and support.


\end{document}